\newcommand{\email}[1]{\href{mailto:#1}{\protect\nolinkurl{#1}}}
\newcommand{\diff}[1]{\,\mathrm{d}#1}
\let\origvec\vec
\newcommand{\vecc}[1]{\origvec{#1}}
\renewcommand{\vec}[1]{\boldsymbol{#1}}
\newcommand{\N}{\mathbb{N}}
\newcommand{\Z}{\mathbb{Z}}
\newcommand{\C}{\mathbb{C}}
\newcommand{\R}{\mathbb{R}}
\newcommand{\F}{\mathbb{F}}
\newcommand{\imagunit}{\mathrm{i}}
\newcommand{\twopii}{2 \pi \imagunit \;}
\DeclareMathOperator{\wal}{wal}
\newcommand{\floor}[1]{\left\lfloor #1 \right\rfloor}
\newcommand{\nzd}[1]{\##1} 
\newcommand{\Wspace}{\mathcal{W}_{\alpha,s,\vec{\gamma}}}
\newcommand{\ralpha}{r_\alpha(\vec{\gamma},\vec{k})}
\newcommand{\Dual}{\mathcal{D}}
\newcommand{\Qnet}{Q^{\mathrm{net}}}
\newcommand{\transp}{\top}
\newcommand{\x}{X}
\DeclareMathOperator*{\argmin}{argmin}
\newcommand{\mg}{\times} 
\def\citep#1#2{\cite[{#1}]{#2}}
\theoremstyle{plain}
  \newtheorem{theorem}{Theorem}
  \newtheorem{lemma}{Lemma}
  \newtheorem{corollary}{Corollary}
\theoremstyle{definition}
  \newtheorem{definition}{Definition}
\theoremstyle{remark}
\newcommand{\RefEq}[1]{~\textup{(\ref{#1})}}
\newcommand{\RefDef}[1]{Definition~\textup{\ref{#1}}}
\newcommand{\RefSec}[1]{Section~\textup{\ref{#1}}}
\newcommand{\RefThm}[1]{Theorem~\textup{\ref{#1}}}
\newcommand{\RefLem}[1]{Lemma~\textup{\ref{#1}}}
\newcommand{\RefCol}[1]{Corollary~\textup{\ref{#1}}}
\newcommand{\RefAlg}[1]{Algorithm~\textup{\ref{#1}}}
\begin{document}

\title{Efficient calculation of the worst-case error \\ 
       and (fast) component-by-component construction \\
       of higher order polynomial lattice rules}
\author{
Jan Baldeaux\thanks{Jan Baldeaux, School of Finance and Economics, The University of Technology, Sydney, NSW 2007, Australia. \email{jan.baldeaux@uts.edu.au}} \and
Josef Dick\thanks{Josef Dick, School of Mathematics and Statistics, The University of New South Wales, Sydney, NSW 2052, Australia. \email{josef.dick@unsw.edu.au} \quad J.D. is supported by a Queen Elizabeth 2 Fellowship from the Australian Research Council.} \and
Gunther Leobacher\thanks{Gunther Leobacher, Institut f\"{u}r Finanzmathematik, Universit\"{a}t Linz, Altenbergerstra{\ss}e 69, A-4040 Linz, Austria. \email{gunther.leobacher@jku.at} \quad G.L. is partially supported by the Austrian Science Foundation (FWF), Project P21196.} \and
Dirk Nuyens\thanks{Dirk Nuyens, Department of Computer Science, K.U.Leuven, Celestijnenlaan 200A -- bus 2402, 3001 Heverlee, Belgium. \email{dirk.nuyens@cs.kuleuven.be} \quad D.N. is a postdoctoral fellow of the Research Foundation Flanders (FWO).} \and
Friedrich Pillichshammer\thanks{Friedrich Pillichshammer, Institut f\"{u}r Finanzmathematik, Universit\"{a}t Linz, Altenbergerstra{\ss}e 69, A-4040 Linz, Austria.
\email{friedrich.pillichshammer@jku.at} \quad F.P. is partially supported by the Austrian Science Foundation (FWF), Project S9609, that is part of the Austrian National Research Network ``Analytic Combinatorics and Probabilistic Number Theory''.}
}
\maketitle

\begin{abstract}
We show how to obtain a fast component-by-component construction algorithm for higher order polynomial lattice rules. Such rules are useful for multivariate quadrature of high-dimensional smooth functions over the unit cube as they achieve the near optimal order of convergence. The main problem addressed in this paper is to find an efficient way of computing the worst-case error.
A general algorithm is presented and explicit expressions for base~2 are given.
To obtain an efficient component-by-component construction algorithm we exploit the structure of the underlying cyclic group.

We compare our new higher order multivariate quadrature rules to existing quadrature rules based on higher order digital nets by computing their worst-case error. These numerical results show that the higher order polynomial lattice rules improve upon the known constructions of quasi-Monte Carlo rules based on higher order digital nets. 
\end{abstract}

\vspace{.5cm}

\noindent\textbf{Keywords:} Numerical integration, quasi-Monte Carlo, polynomial lattice rules, digital nets.\\

\noindent\textbf{2010 Mathematics Subject Classification:} 65D30, 65C05.\\

\section{Introduction}

In this paper we are concerned with quasi-Monte Carlo rules, which are equal weight multivariate quadrature rules (or cubature rules)
\begin{align}\label{eq:qmc-rule}
  Q(f)
  &:=
  \frac{1}{N} \sum_{h=0}^{N-1} f(\vec{x}_h)
  ,
\end{align}
used to approximate multivariate integrals over the $s$-dimensional unit cube
\begin{align}\label{eq:I}
  I(f)
  &:=
  \int_{[0,1]^s} f(\vec{x}) \diff{\vec{x}}
  .
\end{align}
In contrast to the Monte Carlo method, which samples the function $f$ randomly in its domain, the integration nodes $\{\vec{x}_h\}_{h=0}^{N-1}$ used in the quasi-Monte Carlo rule $Q$ are chosen deterministically. The convergence of the error of the Monte Carlo method is $O(N^{-1/2})$ (in distribution), while the worst-case error for quasi-Monte Carlo is $O(N^{-\alpha} (\log N)^{s \alpha})$ \cite{Dic2007, Dic2008, DP2010, Nie92} and the random-case error for randomized quasi-Monte Carlo is $O(N^{-\alpha-1/2} (\log N)^{s (\alpha+1)})$ \cite{Dic2011, Owe97}. The latter two methods require that the integrand has smoothness $\alpha \ge 1$ (which means for instance that the integrand has square integrable partial mixed derivatives up to order $\alpha$ in each variable), whereas the Monte Carlo method requires only that the integrands have finite variance.

Different types of point sets for quasi-Monte Carlo rules exist. Those of interest here are \emph{digital nets} \cite{DP2010,Nie92}. These can be divided into \emph{\textup{(}classical\textup{)} digital nets} \cite{Nie92}, which achieve a convergence rate of $O(N^{-1} (\log N)^s)$ \cite{Nie92} for integrands of bounded variation, and their extension called \emph{higher order digital nets} \cite{Dic2007, Dic2008}, which achieve a convergence rate of $O(N^{-\alpha} (\log N)^{\alpha s})$ for integrands which have square integrable partial mixed derivatives of order $\alpha > 1$.
In \cite[Section~4.4]{Dic2008} an explicit method for constructing such higher order digital nets, based on a classical digital net, can be found. The method in the current paper gives an alternative construction for a higher order version of a specific type of digital net, namely \emph{polynomial lattice point sets}, see \cite[Section~4.4]{Nie92} or \cite[Chapter~10]{DP2010}. Constructions of classical polynomial lattice point sets based on a worst-case error criterion have previously been studied in \cite{DKPS2005}. Note that we call a quasi-Monte Carlo rule whose underlying quadrature points are polynomial lattice points a polynomial lattice rule.

Polynomial lattice point sets were generalized in \cite{DP2007} to obtain \emph{higher order polynomial lattice point sets}. In \cite{DKPS2007} existence results on higher order polynomial lattice point sets were compared to the explicit construction of higher order digital nets \cite{Dic2008} in terms of their $t$-value (a certain quality measure). For some values of dimension $s$ and/or smoothness parameter $\alpha$ the higher order polynomial lattice point sets have a better existence bound than the best results which can currently be obtained using the explicit construction of higher order digital nets from \cite{Dic2009}. The same is also true for classical digital nets, see \cite{LLNS96, Sch2000}. These findings motivated the quest for an explicit construction of higher order polynomial lattice rules in \cite{BDGP2011}. The construction employed there is an algorithm originally proposed for the construction of (integer) lattice rules, namely the \emph{component-by-component construction} algorithm, see, e.g., \cite{Kor59, Kor63, SR2002}. The higher order polynomial lattice rules so constructed achieve nearly optimal rates of convergence. For analogous results on polynomial lattice point sets see \cite{DKPS2005} (and also \cite{DP2005} for more background).

Straightforward implementation of the component-by-component (CBC) algorithm is however very costly with respect to computational time, hence methods for reducing the computational cost are needed. The fast component-by-component algorithm, introduced in \cite{NC2006-prime}, uses fast Fourier transforms (FFTs) to speed up the calculations. Some notes concerning the application of the fast algorithm to the construction of polynomial lattice rules were already made in \cite{NC2006-kernels}, with a more detailed analysis in \cite{Nuy2007}; see also \cite[Section~10.3]{DP2010}. In this paper we will adapt the fast algorithm for higher order polynomial lattice rules. To do so, we find a closed form for the worst-case error of our function space (where we consider the worst-case error as a function of the quadrature points). We show that our algorithm has a computational cost of $O(s N^\alpha \alpha \log N)$ using $O(N^\alpha)$ memory, compared to $O(s^2 N^{\alpha+1})$ for the straightforward implementation of the algorithm in \cite{BDGP2011}, using the same amount of memory. This speedup makes it possible to obtain higher order polynomial lattice rules for moderate dimensions and numbers of points. In the section on numerical results we provide constructions of higher order polynomial lattice rules in base $b = 2$ up to dimension $10$ and up to $4096$ points.
These numbers could be increased with more computational effort, but we have to remark that the search space grows exponentially with respect to the smoothness parameter~$\alpha$.

The efficient calculation of the worst-case error of our function space is an essential ingredient in such an algorithm. We show that the kernel function associated with the worst-case error can be evaluated at a point $x$ in time $O(\alpha n)$, where  $\alpha$ is the smoothness of the space and $x$ is a rational number $v/b^n$, $0 \le v < b^n$. Moreover, in the case of the greatest practical importance, i.e., where the base equals $2$, we show explicit expressions for smoothness~$2$ and~$3$ which are exact for any real $x \in [0,1)$ (see \RefCol{col:base2}).

We compare the performance of higher order polynomial lattice rules
constructed using our fast component-by-component algorithm to the
explicit construction as outlined in \cite{Dic2008} and find that the new algorithm performs better in the cases considered. Finally, for the benefit of the reader, we present some limited tables of higher order polynomial lattice rules constructed using the fast component-by-component algorithm, allowing the reader to apply the rules presented in this paper to problems of interest and to verify implementations of the algorithm.

In the next section we provide the reader with some background, and notation, on Walsh spaces, digital nets, and the worst-case error. More detailed information can be found in \cite{DP2010} and 
\cite{BDGP2011}, where also bounds on the worst-case error for higher order polynomial lattice rules were proven. In Section~\ref{sec:fastcbc} we show how to efficiently calculate the worst-case error and how the construction of higher order polynomial lattice rules can be done using the fast component-by-component approach of \cite{Nuy2007, NC2006-prime}.

\section{Background}\label{sec:funspace}

We first introduce some notation. Let $\N_0 = \{0, 1, 2, \ldots\}$ denote the set of non-negative integers and $\N = \{1, 2, 3, \ldots\}$ the set of positive integers.
Further we need to be able to consider a non-negative integer $k \in \N_0$ in its unique base~$b$ representation:
\begin{align}\label{eq:k-base-b}
  k
  &=
  (\kappa_a \ldots \kappa_0)_b
  =
  \sum_{i=0}^{a} \kappa_i \, b^i
  ,
\end{align}
where $\kappa_i \in \{0, \ldots, b-1\}$ are the base $b$ digits of $k$ and $\kappa_a \ne 0$; $a=0$ for $x=0$.
Note that the base, $b$, is considered a fixed integer throughout. Moreover, in the further development in this paper, $b$ will be prime.
We will be specifically interested in the non-zero base $b$ digits of $k$.
The number of non-zero base $b$ digits of an integer $k$ will be denoted by $\nzd{k}$; where $\nzd{0} = 0$.
We can then represent $k \in \N_0$ uniquely as
\begin{align}\label{eq:k-base-b-nz}
  k
  &=
  \sum_{i=1}^{\nzd{k}} \kappa_{a_i} \, b^{a_i}
  ,
\end{align}
where now $\kappa_{a_i} \in \{1, \ldots, b-1\}$ and we demand $a_1 > \cdots > a_{\nzd{k}} \ge 0$.
Thus $\kappa_{a_1}$ is the most significant base $b$ digit of $k$.
For real $x \in [0,1)$ we write its base~$b$ representation
\begin{align}\label{eq:x-base-b}
  x
  &=
  (0.\xi_1 \xi_2 \ldots)_b
  =
  \sum_{i=1}^\infty \xi_i \, b^{-i}
  ,
\end{align}
where $\xi_i \in \{0, \ldots, b-1\}$.
This representation is unique in the sense that we do not allow an infinite repetition of the digit $b-1$ to the right.

\subsection{A function space based on Walsh series}

For $k \in \N_0$ the one-dimensional $k$th Walsh function in base $b$, $\wal_k : [0, 1) \to \C$, is defined by
\begin{align}\label{eq:walsh-1d}
  \wal_k(x)
  &:=
  \exp(\twopii (\xi_1 \kappa_0 + \cdots + \xi_{a+1} \kappa_a) / b)
  ,
\end{align}
where we have used the base $b$ digits of $x$ and $k$ as given in\RefEq{eq:k-base-b} and\RefEq{eq:x-base-b}. Note that Walsh functions (in base $b$) are piecewise constant functions.
For dimensions $s \ge 2$ and vectors $\vec{k} = (k_1, \ldots, k_s) \in \N_0^s$ and $\vec{x} = (x_1, \ldots, x_s) \in [0,1)^s$ we define $\wal_{\vec{k}} : [0, 1)^s \to \C$ as
\begin{align*}
  \wal_{\vec{k}}(\vec{x})
  &:=
  \prod_{j=1}^s \wal_{k_j}(x_j)
  .
\end{align*}

The integrand functions in this paper are assumed to have an absolutely convergent Walsh series representation
\begin{align*}
  f(\vec{x})
  &=
  \sum_{\vec{k} \in \N_0^s} \widehat{f}_{\vec{k}} \, \wal_{\vec{k}}(\vec{x})
  ,
\end{align*}
where the Walsh coefficients $\widehat{f}_{\vec{k}}$ are given by
\begin{align*}
  \widehat{f}_{\vec{k}}
  &=
  \int_{[0,1]^s} f(\vec{x}) \, \overline{\wal_{\vec{k}}(\vec{x})} \diff{\vec{x}}
  .
\end{align*}
Note that the Walsh functions form a complete orthonormal system of $L_2([0,1]^s)$. For more information on Walsh functions and their properties we refer to \cite[Chapter~14 and Appendix~A]{DP2010}.

In the following we define a function space by demanding a certain decay rate of the Walsh coefficients. To do so, we introduce some further notation. We define, for a fixed integer $\alpha > 1$ and a fixed sequence of positive weights $\vec{\gamma} = \{\gamma_1, \gamma_2, \ldots\}$ (in the sense of \cite{SW98}),
\begin{align}\label{eq:ralpha}
  \ralpha
  &:=
  \prod_{j \in u} \gamma_j \, r_\alpha(k_j)
  ,
  &
  r_\alpha(k)
  &:=
  b^{-\sum_{i=1}^{\min(\nzd{k},\alpha)} (a_i + 1)}
  ,
\end{align}
where for $\boldsymbol{k} = (k_1,\ldots, k_s)$ we set $u = \{1 \le j \le s: k_j\neq 0\}$ and where we used the first $\alpha$ positions $a_1+1,\ldots,a_{\nzd{k}}+1$ of the non-zero base $b$ digits of $k$ with the notation defined in\RefEq{eq:k-base-b-nz} in the one-dimensional definition of $r_\alpha$. For $\boldsymbol{k} = \boldsymbol{0} = (0,\ldots, 0)$ we set
\begin{equation*}
r_{\alpha}(\boldsymbol{0}) = 1.
\end{equation*}

We are now ready to specify which functions are in our function space $\Wspace$, which was also used in \cite{BDGP2011, Dic2008}.
For functions $f \in \Wspace$ we define the norm
\begin{align}\label{eq:norm}
 \| f \|_{\Wspace}
 &:=
 \sup_{\vec{k} \in \N_0^s} \frac{|\widehat{f}_{\vec{k}}|}{\ralpha}.
\end{align}
Then $\Wspace$ consists of all functions $f \in L_2([0,1]^s)$ for which $\|f\|_{\Wspace} < \infty$. The Walsh coefficients of $f \in \Wspace$ therefore satisfy a certain decay criterion, namely
\begin{align}\label{eq:bounded-Walsh-coeff}
  |\widehat{f}_{\vec{k}}|
  &
  \le
  \| f \|_{\Wspace} \; \ralpha
  \quad\forall\vec{k} \in \N_0^s.
\end{align}

It is clear that larger values of $\alpha$ might increase the norm of a function $f$, i.e., $\|f\|_{\Wspace} \le \|f\|_{\mathcal{W}_{\alpha',s,\boldsymbol{\gamma}}}$ for $\alpha \le \alpha'$.
The weights $\gamma_1, \gamma_2, \ldots$ are used to describe how  anisotropic the space is.
Usually it is assumed that $\gamma_1 \ge \gamma_2 \ge \cdots \ge 0$, meaning that the first dimension is more important than the second one and so on. Under certain conditions on these weights, it can be shown that numerical integration is tractable in the number of dimensions, see, e.g., \cite{DP2010,NW2008,NW2010}.




It is of course important to have an understanding of which functions exactly are in such a Walsh space $\Wspace$ with smoothness parameter $\alpha$. This analysis has been done in \cite{Dic2007,Dic2008,Dic2009}.
Classically, one is interested in (smooth) functions $f : [0,1]^s \to \R$ for which all mixed partial derivatives up to order $\alpha$ in each variable are square integrable.
This is a Sobolev space of smoothness $\alpha$ which is often considered for this type of problems.
In \cite{Dic2008, Dic2009} a continuous embedding of certain Sobolev spaces into $\Wspace$ was shown. Consequently, the results we are going to establish in the following for functions in $\Wspace$ also apply automatically to what we normally consider as ``smooth'' functions, for instance, functions which have square integrable partial mixed derivatives up to order $\alpha$ in each variable. One of the simplest type of functions in this space are multivariate polynomials which make up nice testing examples for computer implementations.

\subsection{Higher order digital nets}\label{sec:digital-nets}

Higher order digital nets were introduced in \cite{Dic2008}. Higher order polynomial lattice point sets, which are the focal point of this paper, are a special class of higher order digital nets. For that reason and since we will compare the explicit construction for higher order digital nets from \cite{Dic2008} with the construction given in this paper, we will review the necessary details here. For more information we refer to \cite[Chapter~15]{DP2010}.

For a prime number $b$ we always identify $\F_b$, the finite field with $b$ elements, with $\Z_b=\{0,\ldots,b-1\}$ endowed with the usual arithmetic operations modulo $b$.

First we define higher order digital nets using the digital construction scheme.
As we will need to be able to identify integers with vectors over a finite field by using its base $b$ representation, and then later have to be able to consider vectors of integers as well, we will denote a vector over a finite field $\F_b$ by $\vecc{h}$, in contrast to vectors over $\Z$ or $\R$, which will be denoted  by $\vec{h}$.
\begin{definition}[Digital construction scheme of a digital net over $\F_b$]  Let $b$ be a prime and let $n,m,s \geq 1$ be integers, where $n \geq m$. Let $C_1,\dots,C_s$ be $n \times m$ matrices over the finite field $\F_b$ of order $b$. Now we construct $b^m$ points in $[0,1)^s$: for $0 \le h < b^m$, identify each $h = \sum_{i=0}^{m-1} h_i \, b^i$ with a vector over the finite field
  \begin{align*}
    \vecc{h}
    &:=
    ( h_0, \ldots, h_{m-1})^\transp \in \F_b^m
    .
  \end{align*}
For $1 \le j \le s$ multiply the matrix $C_j$ by $\vecc{h}$ using arithmetic over $\mathbb{F}_b$ to obtain a vector $\vecc{y}_{h,j} \in \F_b^n$:
  \begin{align}\label{eq:mv}
    C_j \, \vecc{h}
    &
    =:
    \vecc{y}_{h,j}
    =
    ( y_{h,j,1}, \ldots, y_{h,j,n} )^\transp
    \in \F_b^n
    ,
  \end{align}
from which the $h$th point $\vec{x}_h$ of the digital net is found by interpreting the coordinates of $\vecc{y}_{h,j}$ as the base $b$ digits of $x_{h,j}$:
  \begin{align*}
    x_{h,j}
    &:=
    \sum_{i=1}^n y_{h,j,i} \, b^{-i}
    \in [0,1) .
  \end{align*}
Now set $\vec{x}_h = ( x_{h,1}, \ldots, x_{h,s} )^\transp \in [0,1)^s$ to be the $h$th point. The set $\{\vec{x}_0, \ldots, \vec{x}_{b^m-1}\}$ is called a \emph{digital net} over $\F_b$ with generating matrices $C_1, \ldots, C_s$.
\end{definition}


This definition of a digital net generalizes the classical construction scheme, e.g., \cite{Nie92}, on which classical digital $(t,m,s)$-nets are based upon, by allowing for generating matrices which are not necessarily square.
The generating matrices $C_j$ are of size $n \times m$, and so, the number of rows $n$ determines the resolution at which the points of the net are placed in the unit cube, i.e., all base $b$ digits after position~$n$ are zero. The integration error then behaves like $O(b^{-k} k^{\alpha s})$, see \cite{Dic2008}, where $\alpha$ is the smoothness of the integrand and $k$ is the \emph{strength} of the net (in accordance with the respective property of classical nets). The strength of the net is defined via linear independence properties of the rows of the generating matrices, see \cite{Dic2007,Dic2008}. For higher order nets one can achieve $k \approx \min(\alpha m, n)$ and hence, provided that $n \ge \alpha m$, one obtains a convergence order of $b^{-\alpha m} (\alpha m)^{\alpha s} \asymp N^{-\alpha} (\log N)^{\alpha s}$, where $N = b^m$ is the number of quadrature points.

We now explain the explicit construction of a higher order digital net in $s$ dimensions for a maximum smoothness $d$ as described in \cite{Dic2008}. The explicit construction starts from a given $(t',m,sd)$-net in base $b$, that is, a classical digital net in $sd$ dimensions for which the generating matrices $C_1, \ldots, C_{sd} \in \F_b^{m \times m}$ are known.

From these $sd$ given matrices, $s$ new generating matrices $C_j^{(d)}$ are constructed of size $dm \times m$ by vertically stacking the first rows from the group of $d$ consecutive matrices $C_{(j-1)d+1},\ldots,C_{jd}$, then the second rows of the same $d$ matrices and so on, until all $dm$ rows have been stacked. More precisely, let $C_j = (\boldsymbol{c}_{j,1}^\top \ldots \boldsymbol{c}_{j,m}^\top)^\top$, where $\boldsymbol{c}_{j,k}^\top$ denotes the $k$th row of the matrix $C_j$. Then $C_j^{(d)} = (\boldsymbol{c}_{(j-1)d+1, 1}^\top \ldots \boldsymbol{c}_{jd,1}^\top \ldots \boldsymbol{c}^\top_{(j-1)d+1,m} \ldots \boldsymbol{c}^\top_{jd, m})^\top$. For more information on these higher order digital nets we refer the reader to \cite{Dic2008} and \cite[Chapter~15]{DP2010}.

An important concept for the error analysis in the next section
is the \emph{dual net}.
It defines the set of Walsh coefficients which are not integrated exactly by the digital net and can therefore be used to write down the integration error.
\begin{definition}[Dual net]\label{def_dual}
For a digital net over $\F_b$ with generating matrices $C_1, \ldots, C_s \in \F_b^{n \times m}$ we define its \emph{dual net} by
  \begin{align*}
    \Dual(C_1,\ldots,C_s)
    &:=
\left\{ \vec{k} \in \N_0^s : C_1^\transp \, \vecc{k}_1 + \cdots + C_s^\transp \, \vecc{k}_s = \vecc{0} \right\}
    ,
  \end{align*}
where for a scalar component $k = \sum_{i=0}^\infty \kappa_i \, b^i$ in $\vec{k}$ we define an associated vector over the finite field $\vecc{k} = ( \kappa_0, \ldots, \kappa_n )^\transp \in \F_b^n$.
\end{definition}

\subsection{The worst-case error}

We define the worst-case error of numerical integration using a cubature rule $Q$ for functions in a Banach space $\mathcal{F}$ by
\begin{align*}
  e(Q, \mathcal{F})
  &:=
  \sup_{\substack{f \in \mathcal{F} \\ \|f\|_{\mathcal{F}} \le 1}} | I(f) - Q(f) |
  .
\end{align*}
We now assume the cubature rule $Q$ to be a quasi-Monte Carlo rule\RefEq{eq:qmc-rule} using a (higher order) digital net as its node set and denote it by $\Qnet$.
For any $f$ having an absolutely convergent Walsh series representation we can write the integration error for $\Qnet$ as a sum over the dual net to obtain
\begin{align}\label{eq:error-f}
  | I(f) - \Qnet(f) |
  &=
  \left| \sum_{\vec{0} \ne \vec{k} \in \Dual} \widehat{f}_{\vec{k}} \right|
  \le
  \sum_{\vec{0} \ne \vec{k} \in \Dual} |\widehat{f}_{\vec{k}}|
  .
\end{align}
For $f \in \Wspace$ we can now use\RefEq{eq:bounded-Walsh-coeff} to obtain
\begin{align}\label{eq:error-bound-for-f}
  | I(f) - \Qnet(f) |
  &\le
  \| f \|_{\Wspace} \, \sum_{\vec{0} \ne \vec{k} \in \Dual} \ralpha
  .
\end{align}
Since we can obtain equality for a worst-case function $\zeta \in \Wspace$ having Walsh series representation
\begin{align*}
  \zeta(\vec{x})
  &=
  \sum_{\vec{k} \in \N_0^s} \ralpha \, \wal_{\vec{k}}(\vec{x})
  ,
\end{align*}
we find the following expression for the worst-case error in $\Wspace$ for a quasi-Monte Carlo rule $\Qnet$ based on a higher order digital net in base $b$:
\begin{align}\label{eq:worst-case-error}
  e(\Qnet, \Wspace)
  &=
  \sum_{\vec{0} \ne \vec{k} \in \Dual} \ralpha
  .
\end{align}
The cubature rules in this paper will be constructed in such a way that they have a worst-case error which is near optimal for the given function space $\Wspace$.
For a given value of $\alpha > 1$ the worst-case error behaves like $O(N^{-\alpha} (\log N)^{\alpha s})$ for $N$ integration nodes (see \cite{Dic2008}) which is essentially best possible according to a lower bound from {\v{S}}arygin~\cite{Sar63}.

\subsection{Higher order polynomial lattice rules}


In \cite{DP2007} the classical polynomial lattice rules \cite{Nie92} were generalized to form higher order polynomial lattice rules.
Just like classical polynomial lattice point sets are a special class of digital nets, higher order polynomial lattice point sets are a special class of higher order digital nets. For simplicity, we define the (higher order) polynomial lattice rules over a finite field  $\F_b$ of prime order $b$ only. The main object for the construction of polynomial lattice rules are formal Laurent series, i.e., expressions of the form $\sum_{i=\ell}^\infty w_i \, \x^{-i}$, where $\ell \in \Z$ and $w_i \in \F_b$. We denote the set of formal Laurent series by $\F_b((\x^{-1}))$. These Laurent series then need to be mapped to integration nodes over the unit interval $[0, 1)$. Define the map $v_n : \F_b((\x^{-1})) \to [0,1)$ by
\begin{align}\label{eq:vn}
   v_n\left( \sum_{i=\ell}^\infty w_i \, \x^{-i} \right)
   &:=
   \sum_{i=\max(\ell,1)}^n w_i \, b^{-i}.
\end{align}
Similar to the case for digital nets, we now need to be able to identify an integer $h$ with a polynomial in $\F_b[\x]$ by considering $h$ in its base $b$ representation, the associated polynomial will be denoted by $h(\x)$. The details are given in the following definition.

\begin{definition}[Polynomial lattice rule]\label{def:poly-lattice-rule}
  Let $b$ be prime and $1 \le m \le n$.
  For a given dimension $s \ge 1$, choose $p(\x) \in \F_b[\x]$ with $\deg(p) = n \ge 1$ and let $q_1(\x),\ldots,q_s(\x) \in \F_b[\x]$.
  Now we construct $b^m$ points in $[0,1)^s$: for $0 \le h < b^m$, identify each $h = \sum_{i=0}^{m-1} h_i \, b^i$ with a polynomial over $\F_b$
  \begin{align*}
    h(\x)
    &:=
    \sum_{i=0}^{m-1} h_i \, \x^i \in \F_b[\x] .
  \end{align*}
  Then the $h$th point is obtained by setting
  \begin{align*}
    \vec{x}_h
    &:=
    \left( v_n\left( \frac{h(\x) \, q_1(\x)}{p(\x)} \right) , \ldots , v_n\left( \frac{h(\x) \, q_s(\x)}{p(\x)} \right) \right) \in [0,1)^s .
  \end{align*}
  A quasi-Monte Carlo rule using this point set is called a \emph{polynomial lattice rule}.
\end{definition}

One obtains classical polynomial lattice rules from \RefDef{def:poly-lattice-rule} by taking $n = m$.
For simplicity we will assume that $p(\x)$ is irreducible over $\F_b$, though this assumption could be removed by a more intricate analysis. 
We define
\begin{align*}
  G_{b,n}
  &=
  \{ v(\x) \in \F_b[\x] \setminus \{ 0 \}: \deg(v) < n \},
\end{align*}
which will be the set from which we will select the generating polynomials $q_j(\x)$.
Clearly, as $p(\x)$ is irreducible and $\deg(p) = n$, this equals the multiplicative group
\begin{align*}
  G_{b,n}
  &=
  (\F_b[\x]/p(\x))^\mg = \{ g(\x)^\beta : 0 \le \beta < b^n-1 \}
  ,
\end{align*}
where $g(\x)$ is a generator for the multiplicative
group $(\F_b[\x]/p(\x))^\mg$, e.g., we can take $g(\x) = \x$ when $p(\x)$ is primitive.

Since a polynomial lattice point set is a special case of a digital net, we can find the generating matrices $C_1,\ldots,C_s \in \F_b^{n \times m}$ from the generating vector $\vec{q}(\x) = (q_1(\x),\ldots,q_s(\x))$.
For $1 \le j \le s$ consider the Laurent expansions
\begin{align*}
  \frac{q_j(\x)}{p(\x)}
  &=
  \sum_{i=\ell_j}^\infty u_i^{(j)} \, \x^{-i} \in \F_b((\x^{-1}))
  .
\end{align*}
Then the elements $c_{k,\ell}^{(j)}$ of the $n \times m$ generating matrix $C_j$ over $\F_b$ are given by
\begin{align}\label{eq:generatingmatrices}
  c_{k,\ell}^{(j)}
  &=
  u_{k+\ell}^{(j)}
  ,
\end{align}
for $1 \le k \le n$, $0 \le \ell \le m - 1$; see, e.g., \cite[Section~10.1]{DP2010}.


In\RefEq{eq:worst-case-error} we used the dual net to obtain the worst-case error.
In the case of a polynomial lattice rule the dual is given in the next definition.
(We use the convention $\deg(0) = -\infty$.)
\begin{definition}[Dual polynomial lattice]\label{def_dual_poly}
  A polynomial lattice with generating vector $\vec{q}(\x) = (q_1(\x),\ldots,q_s(\x)) \in (\F_b[\x])^s$ modulo $p(\x) \in \F_b[\x]$ has a \emph{dual polynomial lattice}
  \begin{align*}
    \Dual(\vec{q}(\x), p(\x))
    &:=
    \left\{
      \vec{k} \in \N_0^s : \sum_{j=1}^s k_j(\x) \, q_j(\x) \equiv a(\x) \pmod{p(\x)}
        \quad \text{with } \deg(a) < n - m
    \right\}
    .
  \end{align*}
\end{definition}
A proof for the equivalence of Definition~\ref{def_dual} and Definition~\ref{def_dual_poly} for polynomial lattices follows from \cite[Lemma~15.25]{DP2010}.

Specifically for a polynomial lattice rule with generating vector $\vec{q}(\x) = (q_1(\x),\ldots,q_s(\x))$ modulo $p(\x)$ having $b^m$ points, it follows from\RefEq{eq:worst-case-error} that its worst-case error in $\Wspace$ satisfies
\begin{align}\label{eq:poly-worst-case-error}
  e_{b^m,\alpha}(\vec{q}(\x), p(\x))
  &=
  \sum_{\vec{0} \ne \vec{k} \in \Dual}
  r_\alpha(\vec{\gamma},\vec{k})
  ,
\end{align}
with $\Dual$ the dual polynomial lattice.

\subsection{The component-by-component construction of higher order polynomial lattice rules}\label{sec:cbc}

The component-by-component construction algorithm was introduced by Korobov~\cite{Kor59}, see also \cite[Theorem~18, p.~120]{Kor63}, and later re-invented in \cite{SR2002} to construct the generating vector of an integer lattice rule.
This algorithm first finds the optimal one-dimensional generating vector, which is subsequently extended in an optimal way to a two-dimensional generating vector and so on.
\RefAlg{alg:cbc} spells out the details for the construction in the case of higher order polynomial lattice rules.

\begin{algorithm}[H]
\caption{General form of CBC construction of higher order polynomial lattice rules}\label{alg:cbc}
\begin{algorithmic}
\STATE \textbf{Input:} base $b$ a prime, number of dimensions $s$, number of points $b^m$, smoothness $\alpha > 1$, and weights $\vec{\gamma} = (\gamma_j)_{j\ge1}$
\STATE \textbf{Output:} Generating vector $\vec{q}(\x) = (q_1(\x), \ldots, q_s(\x)) \in G_{b,n}^{s}$ \\[2mm]
\STATE Choose an irreducible polynomial $p(\x) \in \F_b[\x]$, with $\deg(p)=n$ and $n=\alpha m$
\FOR{$d=1$\TO $s$}
  \STATE Set $q_d(\x) \in G_{b,n}$ by minimizing $e_{b^m,\alpha}((q_1(\x),\dots,q_d(\x)),p(\x))$ as a function of $q_d(\x)$
\ENDFOR
\RETURN $\vec{q}(\x)=(q_1(\x),\dots,q_s(\x))$
\end{algorithmic}
\end{algorithm}

The analysis of the component-by-component algorithm adjusted to the case of higher order polynomial lattice rule was done in \cite{BDGP2011}.
The following theorem shows that \RefAlg{alg:cbc} achieves almost optimal rates of convergence.
For a proof we refer to \cite{BDGP2011}. 
\begin{theorem}
  Let $b$ be prime, $s,n \in \N$ and $p(\x) \in \F_b[\x]$ be irreducible with $\deg(p) = n$, $\alpha > 1$.
  Suppose $(q_1(\x),\ldots,q_s(\x)) \in G_{b,n}^s$ is constructed using \RefAlg{alg:cbc}. Then for all $d=1,\ldots,s$ we have a bound on the worst-case error as follows:
  \begin{align*}
    e_{b^m,\alpha}((q_1(\x),\ldots,q_d(\x)), p(\x))
    &\le
    \frac{1}{b^{\min(\tau m, n)}}
    \prod_{j=1}^d \left( 1 + 3 \gamma_j^{1/\tau} C_{b,\alpha,\tau} \right)^\tau
    ,
    &
    \forall \; 1 \le \tau < \alpha
    ,
  \end{align*}
  where
  \begin{align*}
    C_{b,\alpha,\tau}
    &:=
    \frac{(b-1)^\alpha}{b^{\alpha/\tau}-b}
    \prod_{i=1}^{\alpha-1} \frac{1}{b^{i/\tau}-1}
    +
    \begin{cases}
      \; \alpha - 1 & \text{if } \tau = 1, \\[1mm]
      \displaystyle \frac{(b-1)((b-1)^{\alpha-1}-(b^{1/\tau}-1)^{\alpha-1})}{(b-b^{1/\tau})(b^{1/\tau}-1)^{\alpha-1}}  & \text{if } \tau > 1.
    \end{cases}
  \end{align*}
\end{theorem}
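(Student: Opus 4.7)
The plan is to follow the standard component-by-component averaging argument, but adapted to the higher order setting (as carried out in \cite{BDGP2011}). The worst-case error is given by the sum
\[
e_{b^m,\alpha}((q_1,\ldots,q_d),p) = \sum_{\vec{0} \neq \vec{k} \in \Dual} r_\alpha(\vec{\gamma},\vec{k}),
\]
which is a sum of non-negative terms. My first move is to apply Jensen's inequality in the form $(\sum a_k)^{1/\tau} \leq \sum a_k^{1/\tau}$, valid since $1/\tau \leq 1$. This linearises the error for an averaging argument at the cost of replacing $r_\alpha$ by $r_\alpha^{1/\tau}$ and raising the final bound to the power $\tau$. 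It is precisely this step that forces the parameter $\tau \in [1,\alpha)$ to appear.

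Second, I would proceed by induction on the dimension $d$. Assuming the claimed bound holds after $d-1$ coordinates, I split the sum over $\vec{0} \neq \vec{k} \in \Dual$ into those $\vec{k}$ with $k_d = 0$ (which reduces to a $(d-1)$-dimensional dual sum and hence to the inductive hypothesis) and those with $k_d \neq 0$ (which depend on the new $q_d$). Using the CBC criterion, $q_d$ is chosen to minimise the error, so in particular it is bounded by the average over $q_d \in G_{b,n}$. The standard swap of summation order then reduces the bound to a character-sum / dual-lattice count over $G_{b,n}$: writing the membership condition $\sum_{j=1}^d k_j(\x) q_j(\x) \equiv a(\x) \pmod{p(\x)}$ with $\deg(a) < n-m$, and averaging over $q_d$, yields the familiar ``delta'' behaviour that decouples the $d$-th coordinate from the rest, producing the factor $(1+3\gamma_d^{1/\tau} C_{b,\alpha,\tau})$ times the inductive factor for dimensions $1,\ldots,d-1$.

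Third, I must evaluate the one-dimensional sum $\sum_{k=1}^\infty r_\alpha(k)^{1/\tau}$ that gives rise to the constant $C_{b,\alpha,\tau}$. By the definition \eqref{eq:ralpha}, $r_\alpha(k)^{1/\tau} = b^{-\sum_{i=1}^{\min(\nzd{k},\alpha)}(a_i+1)/\tau}$, so the sum is naturally stratified according to $\min(\nzd{k},\alpha)$ and the positions $a_1 > \cdots > a_{\nzd{k}} \geq 0$ of the non-zero digits. For $\ell < \alpha$ non-zero digits one gets a finite geometric piece; for $\ell = \alpha$ one continues with a geometric tail over the remaining free digits. Summing over all positions and digit values $\kappa_{a_i} \in \{1,\ldots,b-1\}$ produces closed-form geometric series in $b^{1/\tau}$. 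The case $\tau = 1$ degenerates (the tail series becomes arithmetic rather than geometric, yielding the $\alpha - 1$ term), which is why the constant $C_{b,\alpha,\tau}$ is defined piecewise. Tracking the constant $3$ requires some care in how the splitting into $k_d = 0$ and $k_d \neq 0$ contributions is combined with the factor of $2$ coming from the averaging bound on the character sum.

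The main obstacle is the explicit evaluation of $C_{b,\alpha,\tau}$: the definition of $r_\alpha$ uses only the first $\alpha$ non-zero digits of $k$, so $r_\alpha^{1/\tau}$ is not simply a geometric function of a single parameter but depends intricately on both the \emph{number} and the \emph{positions} of non-zero digits. Getting the $\tau > 1$ and $\tau = 1$ formulas to come out exactly in the stated closed form, and then raising the product back to the power $\tau$ (to undo Jensen) while preserving the clean ``$b^{-\min(\tau m,n)}$'' factor from the dual-lattice condition $\deg(a) < n - m$, is the technical heart of the argument. Once the one-dimensional constant is in hand, the product over $j = 1, \ldots, d$ follows immediately from iterating the inductive step.
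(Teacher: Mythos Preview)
The paper itself does not prove this theorem; immediately before the statement it says ``For a proof we refer to \cite{BDGP2011}.'' Your outline---Jensen's inequality to pass from $r_\alpha$ to $r_\alpha^{1/\tau}$, induction on $d$ with the split $k_d=0$ versus $k_d\neq 0$, averaging the $k_d\neq 0$ part over $q_d\in G_{b,n}$ using the dual-lattice condition $\deg(a)<n-m$, and the closed-form evaluation of $\sum_{k\ge 1} r_\alpha(k)^{1/\tau}$ yielding $C_{b,\alpha,\tau}$---is exactly the argument carried out in \cite{BDGP2011}, so your approach is correct and there is nothing further in the present paper to compare it against.
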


Formula\RefEq{eq:poly-worst-case-error} for the worst-case error is not in a usable form for computation due to the infinite sum. The next lemma shows how to obtain a closed-form expression which resembles the formula for the worst-case error as it appears when the space of integrands is a reproducing kernel Hilbert space, see \cite{BD2011}.
\begin{lemma}\label{lem:wce}
  The worst-case integration error in $\Wspace$, $\alpha > 1$, associated with a polynomial lattice rule with generating vector $\vec{q}(\x) = (q_1(\x),\ldots,q_s(\x))$ modulo $p(\x)$ having $b^m$ points satisfies
  \begin{align}\label{eq:poly-wce-prod}
    e_{b^m,\alpha}((q_1(\x),\ldots,q_s(\x)), p(\x))
    &=
    -1 + \frac{1}{b^m} \sum_{h=0}^{b^m-1} \prod_{j=1}^s (1 + \gamma_j \, \omega_\alpha(x_{h,j}))
    ,
  \end{align}
  where, using\RefEq{eq:ralpha},
  \begin{align}\label{eq:omega}
    \omega_\alpha(x)
    &:=
    \sum_{k=1}^\infty r_\alpha(k) \, \wal_k(x)
    .
  \end{align}
\end{lemma}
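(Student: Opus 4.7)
The plan is to start from the expression \eqref{eq:poly-worst-case-error} for the worst-case error as a sum over the dual polynomial lattice, rewrite this sum as a sum over all of $\N_0^s$ by inserting a character-sum indicator for the dual, swap the order of summation, and then exploit the product structures in both $r_\alpha(\vec{\gamma},\vec{k})$ and $\wal_{\vec{k}}$ to factor the resulting expression coordinate by coordinate.

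The key ingredient is the character-sum (orthogonality) property for polynomial lattice points, which follows from the digital-net equivalence between \RefDef{def_dual} and \RefDef{def_dual_poly} noted after the latter definition: for every $\vec{k} \in \N_0^s$,
\begin{equation*}
  \frac{1}{b^m} \sum_{h=0}^{b^m-1} \wal_{\vec{k}}(\vec{x}_h)
  =
  \begin{cases}
    1, & \vec{k} \in \Dual(\vec{q}(\x), p(\x)), \\
    0, & \text{otherwise}.
  \end{cases}
\end{equation*}
Before exchanging sums I would verify absolute convergence of the unrestricted sum $\sum_{\vec{k} \in \N_0^s} r_\alpha(\vec{\gamma},\vec{k})$, which is a product over $j$ of $1 + \gamma_j \sum_{k \ge 1} r_\alpha(k)$; the one-dimensional series $\sum_{k \ge 1} r_\alpha(k)$ converges for $\alpha > 1$ because the summand decays like $b^{-\alpha a_1}$ in the leading digit position and can be bounded by a geometric-type sum. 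Since $|\wal_{\vec{k}}(\vec{x})| = 1$, this gives uniform absolute convergence and justifies all interchanges.

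Carrying out the swap yields
\begin{equation*}
  \sum_{\vec{k} \in \Dual} r_\alpha(\vec{\gamma},\vec{k})
  =
  \frac{1}{b^m} \sum_{h=0}^{b^m-1} \sum_{\vec{k} \in \N_0^s} r_\alpha(\vec{\gamma},\vec{k}) \, \wal_{\vec{k}}(\vec{x}_h).
\end{equation*}
Because $\wal_{\vec{k}}(\vec{x}) = \prod_j \wal_{k_j}(x_j)$ and $r_\alpha(\vec{\gamma},\vec{k}) = \prod_{j : k_j \ne 0} \gamma_j\, r_\alpha(k_j)$, the inner sum factors coordinate-wise; splitting off the $k_j = 0$ term in each factor and using $\wal_0 \equiv 1$ gives
\begin{equation*}
  \sum_{\vec{k} \in \N_0^s} r_\alpha(\vec{\gamma},\vec{k}) \, \wal_{\vec{k}}(\vec{x}_h)
  =
  \prod_{j=1}^s \left( 1 + \gamma_j \sum_{k=1}^\infty r_\alpha(k) \, \wal_k(x_{h,j}) \right)
  =
  \prod_{j=1}^s (1 + \gamma_j \, \omega_\alpha(x_{h,j})),
\end{equation*}
by the definition \eqref{eq:omega} of $\omega_\alpha$.

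Finally, to match the sum in \eqref{eq:poly-worst-case-error}, which excludes $\vec{k} = \vec{0}$, I subtract the $\vec{k} = \vec{0}$ contribution, namely $r_\alpha(\vec{\gamma},\vec{0}) = 1$, producing the $-1$ term in \eqref{eq:poly-wce-prod}. The only genuine obstacle is the character-sum identity for polynomial lattices, but this is a standard consequence of the digital-net structure and the cited \cite[Lemma~15.25]{DP2010}; the rest is bookkeeping with product expansions and an interchange of summation justified by the convergence check above.
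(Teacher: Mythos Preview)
Your proposal is correct and follows essentially the same route as the paper's own proof: both invoke the character-sum property of digital nets to replace the dual-lattice indicator by the point average $\frac{1}{b^m}\sum_h \wal_{\vec{k}}(\vec{x}_h)$, swap the order of summation, factor the full $\N_0^s$-sum coordinate-wise using the product structure of $r_\alpha(\vec{\gamma},\vec{k})$ and $\wal_{\vec{k}}$, and subtract the $\vec{k}=\vec{0}$ term. Your version is slightly more detailed in that you explicitly justify the interchange of sums via absolute convergence, which the paper leaves implicit.
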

\begin{proof}
  We make use of the character property of digital nets (see \cite[Lemma~4.75]{DP2010}).
  When $\left\{ \vec{x}_h \right\}^{b^m-1}_{h=0}$ are the points of a digital net (or a polynomial lattice rule) and $\Dual$ is its dual net, then
  \begin{align*}
    \frac{1}{b^m}\sum_{h=0}^{b^m-1} \wal_{\vec{k}}(\vec{x}_h)
    &=
    \begin{cases}
      1 & \text{if } \vec{k} \in \Dual , \\
      0 & \text{otherwise} .
    \end{cases}
  \end{align*}
  Thus, starting from\RefEq{eq:poly-worst-case-error}, we obtain
  \begin{align*}
    e_{b^m,\alpha}((q_1(\x),\ldots,q_s(\x)), p(\x))
    &=
    \sum_{\vec{0} \ne \vec{k} \in \N_0^s}
    r_\alpha(\vec{\gamma},\vec{k})
    \frac{1}{b^m}
      \sum_{h=0}^{b^m-1} \wal_{\vec{k}}(\vec{x}_h)
    \\
    &=
    -1 + \frac{1}{b^m} \sum_{h=0}^{b^m-1} \sum_{\vec{k} \in \N_0^s} r_\alpha(\vec{\gamma},\vec{k}) \, \wal_{\vec{k}}(\vec{x}_h)
    \\
    &=
    -1 + \frac{1}{b^m} \sum_{h=0}^{b^m-1} \prod_{j=1}^s ( 1 + \gamma_j \, \omega_\alpha(x_{h,j}) )
    .
  \end{align*}
\end{proof}

As in \cite{NC2006-prime} we can now do a basic operation count for the computational cost of \RefAlg{alg:cbc}.
In comparison to \cite{NC2006-prime}, the analysis is a little bit more involved here as the evaluation of\RefEq{eq:poly-wce-prod} involves calculating
\begin{align}\label{eq:omega-complication}
  \omega_\alpha\left( v_n\left( \frac{h(\x) \, q(\x)}{p(\x)} \right) \right)
  && \text{for } h=0,\ldots,b^m-1 \text{ and all } q(\x) \in G_{b,n}
  ,
\end{align}
in each iteration.
Assuming $c_\omega = c_\omega(\alpha,n)$ to be the cost of evaluating $\omega_\alpha(x)$ and $c_v = c_v(n)$ the cost of mapping and calculating the Laurent expansion as well as calculation the polynomial product, the cost of a straightforward implementation of \RefAlg{alg:cbc} is $O(s^2 b^n b^m (c_\omega + c_v))$ where $n=\alpha m$.
However, calculating $\omega_\alpha(v_n(h(\x)\,q(\x)/p(\x)))$ efficiently for given $h(\x)$ and $q(\x)$ is an important issue which is solved in the next section and in practice it would be inefficient to calculate these values on the fly whenever needed.
To that end we model the algorithm into a more tangible form using the techniques from \cite{NC2006-prime,NC2006-kernels} to obtain a fast component-by-component algorithm that makes use of a circular convolution which can be calculated by means of fast Fourier transforms (FFTs).

\section{Fast construction of higher order polynomial lattice rules}\label{sec:fastcbc}

The exposition here mainly follows the techniques from \cite{NC2006-prime,NC2006-kernels}, but, as mentioned in the previous section, the analysis is more complicated due to the need to calculate\RefEq{eq:omega-complication}.
The derivation of the fast algorithm is kept concise by relying as quickly as possible on the structure of the underlying multiplicative group, but we need to take into consideration the cost $c_v$ of working with polynomials over finite fields.
In \RefSec{sec:kernel} we will give efficient methods to calculate $\omega_\alpha$.

The product over $j$ in Lemma~\ref{lem:wce} can be reused and extended from the previous iteration.
We store this product in a vector $\vec{P}_d = (P_d(0),\ldots, P_d(b^m-1))$ of length $b^m$, where
\begin{align*}
  P_d(h)
  :=
  \prod_{j=1}^d ( 1 + \gamma_j \, \omega_\alpha(x_{h,j}) )
  &=
  P_{d-1}(h) \, \left( 1 + \gamma_j \, \omega_\alpha\left( v_n\left( \frac{h(\x) \, q_d(\x)}{p(\x)} \right) \right) \right)
  ,
\end{align*}
for all $0 \le h < b^m$ and $P_0(h) = 1$. Thus $\vec{P}_d$ can be calculated using the stored value for $\vec{P}_{d-1}$. Hereby we reduce the construction cost by a factor of $s$ at the cost of $O(b^m)$ memory.

The computations of $\omega_\alpha( v_n(h(\x) \, q(\x) / p(\x) ) )$ could be done in the initialization of the algorithm.
Since $v_n$ only depends on the negative powers of $\x$ we in fact have
\begin{align*}
  v_n\left( \frac{h(\x) \, q(\x)}{p(\x)} \right)
  &=
  v_n\left( \frac{h(\x) \, q(\x)}{p(\x)} \bmod{1(\x)} \right)
  =
  v_n\left( \frac{h(\x) \, q(\x) \bmod{p(\x)}}{p(\x)} \right)
  .
\end{align*}
So, for fixed $p(\x)$, we can think of $v_n$ as being a function from $\F_b[\x]/p(\x) = \{ w(\x) \in \F_b[\x] : \deg(w) < n \}$ to $[0,1)$.
We can precompute these $b^n$ values giving a construction cost of $O(s b^n b^m c_v + b^n (c_\omega + c_v))$ at a cost of $O(b^m+b^n)$ memory.
However, the cost $c_v$ is presumably dominating $c_\omega$ (most certainly so for the $\omega_\alpha$ expressions we will derive in \RefCol{col:base2}).
It is standard practice to use a lookup table based on a generator when doing multiplications over a finite field.
Making this change the construction cost becomes $O(s b^n b^m + b^n (c_\omega + \tilde{c}_v))$ at a cost of $O(b^m + 2 b^n)$ memory (we have explicitly written the constant for clarity: there is a $O(b^n)$ cost for the values of $\omega_\alpha$ and a $O(b^n)$ cost for the lookup table).
Here, $\tilde{c}_v$ is a lot cheaper than $c_v$ as one has to multiply only by the same generator to construct the table.
We do note however that the $O(b^n)$ memory cost grows exponentially with $\alpha$ as $n = \alpha m$.

For the lookup table we made use of the fact that there exists a generator $g(\x)$ for the multiplicative group for which
\begin{align*}
  (\F_b[\x]/p(\x))^\mg
  &:=
  \{ g(\x)^\beta \bmod{p(\x)}: 0 \le \beta < b^n-1 \}
  =
  \F_b[\x]/(p(\x)) \setminus \{ 0 \},
\end{align*}
since we assumed $p(\x)$ to be irreducible over $\F_b[\x]$.
For brevity we define the auxiliary function $\omega$ to make use of the indices w.r.t.\ the generator $g(\x)$:
\begin{align*}
  \omega : \Z_{b^n - 1} \to [0,1) : 
  \omega(\beta - \delta)
  &\hphantom{:}=
  \omega(\beta - \delta \bmod (b^n-1))
  \\
  &:= \omega_\alpha\left( v_n\left( \frac{h(\x) \, q(\x) \bmod{p(\x)}}{p(\x)} \right) \right)
  \\
  &\hphantom{:}=
  \omega_\alpha\left( v_n\left( \frac{g(\x)^\beta \, g(\x)^{-\delta} \bmod{p(\x)}}{p(\x)} \right) \right)
  \\
  &\hphantom{:}=
  \omega_\alpha\left( v_n\left( \frac{g(\x)^{\beta-\delta} \bmod{p(\x)}}{p(\x)} \right) \right)
  ,
\end{align*}
where $h(\x)$ and $q(\x)$ are such that $h(\x) = g(\x)^\beta \bmod{p(\x)}$ and $q(\x) = g(\x)^{-\delta} \bmod{p(\x)}$.

Now consider the worst-case error explicitly in terms of $q_d(\x)$ as $E_d(q_d(\x))$. Then we can write the worst-case error iteratively in the form
\begin{align*}
  E_d(q_d(\x))
  &:=
  e_{b^m,\alpha}((q_1(\x),\ldots,q_d(\x)),p(\x))
  \\
  &\hphantom{:}=
  -1 + \frac{1}{b^m} \sum_{h=0}^{b^m-1} P_{d}(h)
  \\
  &\hphantom{:}=
  -1 + \frac{1}{b^m} \sum_{h=0}^{b^m-1} P_{d-1}(h) + \frac{\gamma_d}{b^m} \sum_{h=0}^{b^m-1} P_{d-1}(h) \, \omega_\alpha( v_n( h(\x) \, q_d(\x) / p(\x) ) )
  \\
  &\hphantom{:}=
  e_{b^m,\alpha}((q_1(\x),\ldots,q_{d-1}(\x)),p(\x)) + \frac{\gamma_d}{b^m} \sum_{h=0}^{b^m-1} P_{d-1}(h) \, \omega_\alpha( v_n( h(\x) \, q_d(\x) / p(\x) ) )
  \\
  &\hphantom{:}=
  e_{b^m,\alpha}((q_1(\x),\ldots,q_{d-1}(\x)),p(\x)) 
  \\
  &\qquad + \frac{\gamma_d}{b^m} \omega_\alpha(0) + \frac{\gamma_d}{b^m} \sum_{h=1}^{b^m-1} P_{d-1}(h) \, \omega_\alpha( v_n( h(\x) \, q_d(\x) / p(\x) ) )
  ,
\end{align*}
where the worst-case error for the zero-dimensional rule is~$0$.
The main computational burden is now hidden in calculating the last sum which we can write in terms of the auxiliary function $\omega$ as an \emph{extended sum}:
\begin{align}\label{eq:extendedsum}
  \sum_{h=1}^{b^m-1} \omega_\alpha( v_n( h(\x) \, q_d(\x) / p(\x) ) ) \, P_{d-1}(h)
  &=
  \sum_{\beta=0}^{b^n-2} \omega(\beta-\delta) \, Q_{d-1}( \beta )
\end{align}
where $\delta$ is such that $q_d(\x) = g(\x)^{-\delta} \bmod{p(\x)}$ and
\begin{align*}
  Q_{d-1}(\beta) 
  &:=
  \begin{cases}
    P_{d-1}(g(\x)^\beta \bmod{p(\x)}) & \text{if } \deg(g(\x)^\beta \bmod{p(\x)}) < m, \\
    0 & \text{otherwise}.
  \end{cases}
\end{align*}
Let $\vec{Q}_d = (Q_d(0),\ldots, Q_d(b^{n}-2))$, then we have $\vec{Q}_d = \Pi^\transp_{g(\x)^{-1}} \vec{P}_d$, where
\begin{equation*}
\Pi^\transp_{g(\x)^{-1}} = \left(a_{u,v}\right)_{0 \le u \le b^n-2, 1 \le v < b^m}
\end{equation*}
and
\begin{equation*}
a_{u,v} = \left\{\begin{array}{ll} 1 & v(\x) \equiv g(\x)^u \bmod{p(\x)}, \\ 0 & \mbox{otherwise}. \end{array} \right.
\end{equation*}
Thus $\vec{Q}_d$ is obtained by permuting the elements of the vector $(\vec{P}_d, \vec{0}) \in \mathbb{R}^{b^n-1}$.

This extended sum\RefEq{eq:extendedsum}, calculated for all possible
choices of $q_d(\x) = g(\x)^{-\delta} \bmod{p(\x)} \in (\F_b[\x]/p(\x))^\mg$, i.e., $0 \le \delta < b^n-1$, is in fact a \emph{circular convolution} of length $b^n-1$
\begin{align}
  \nonumber
  S_d(\delta)
  &:=
  \sum_{\beta=0}^{b^n-2} \omega(\beta-\delta \bmod{(b^n-1)}) \, Q_{d-1}(\beta)
  \\
  \label{eq:cconv}
  &\hphantom{:}=
  \sum_{\beta=0}^{b^n-2} \omega(\beta) \, Q_{d-1}(\beta+\delta \bmod{(b^n-1)})
  .
\end{align}
Calculating this convolution in the Fourier domain by the use of fast Fourier transforms (FFTs) takes time $O(b^n \log b^n)$, see \cite{Nus82} for a general reference. We obtain a construction cost for the fast component-by-component algorithm using FFTs of $O(s b^n \log b^n + b^n ( c_\omega + \tilde{c}_v ))$ using $O(b^n)$ memory. In other words, as $n=\alpha m$, the factor $b^m$ in the original complexity has been reduced to $\alpha \log b^m$.
Asymptotically this is always faster for increasing $m$.

We end this section with an overview of the complexities and their memory trade of:
\begin{center}\begin{tabular}{c|cc}
            & Construction cost &  \\
  Algorithm & $= s \text{\{iteration cost\}} + \text{\{initialization cost\}}$ & Memory cost \\\hline 
  Straightforward & $s^2 b^n b^m ( c_\omega + c_v )$ &  $\vphantom{A^{A^{A}}}$\\
  Cache $\vec{P}_d$ vector & $s b^n b^m ( c_\omega + c_v )$ & $b^m$ \\
  Precalculate $\omega$ & $s b^n b^m + b^n ( c_\omega + \tilde{c}_v )$ & $b^n$ \\
  Fast convolution & $s b^n \log b^n + b^n ( c_\omega + \tilde{c}_v )$ & $b^n$ \\
\end{tabular}\end{center}
All these algorithms, except the fast convolution algorithm, have iteration times of $O(b^n b^m)$ and so they will all be asymptotically slower than the fast convolution algorithm.
Timings on a real machine for $b=2$ show the break even point to be at $m=5$ for $\alpha=2$ and $m=6$ for $\alpha=3$.
The fast component-by-component algorithm based on fast convolution is given in \RefAlg{alg:fast-cbc}.

\begin{algorithm}[H]
\caption{Fast CBC construction of higher order polynomial lattice rules}\label{alg:fast-cbc}
\begin{algorithmic}
\STATE \textbf{Input:} base $b$ a prime, number of dimensions $s$, number of points $b^m$, smoothness $\alpha > 1$, and weights $\vec{\gamma} = (\gamma_j)_{j\ge1}$
\STATE \textbf{Output:} Generating vector $\vec{q}(\x) = (q_1(\x), \ldots, q_s(\x)) \in G_{b,n}^{s}$ \\[2mm]
\STATE Choose an irreducible polynomial $p(\x) \in \F_b[\x]$,
$\deg(p)=n$ and $n=\alpha m$ and generator $g(\x)$
\STATE Set $e_0 = 0$,
       $\displaystyle \vec{Q}_0 = \Pi_{g(\x)^{-1}}^\transp \! \begin{pmatrix} \vec{1}_{b^m \times 1} \\ \vec{0}_{(b^n-b^m) \times 1} \end{pmatrix}$
       \\ and
       $\vec{\omega} = \Bigl( \omega_\alpha( v_n( g(\x)^{\delta} \pmod{p(\x)} / p(\x)) ) \Bigr)_{\delta=0,\ldots,b^n-2}^{\vphantom{A^A}}$
\FOR{$d=1$ \TO $s \vphantom{A^{A^A}}$}
  \STATE $\vec{S}_d = \vec{\omega} \circledast \vec{Q}_{d-1}$
  \quad (by (fast) circular convolution)
  \STATE $\displaystyle \delta = \argmin_{0 \le \delta < b^n-1} S_d(\delta)$
  \STATE Set $q_d(\x) = g(\x)^{\delta} \pmod{p(\x)}$
  \STATE Update/set $\vec{Q}_d$ and
  $\displaystyle e_d = e_{d-1} + \frac{\gamma_d}{b^m} \, \omega_\alpha(0) + \frac{\gamma_d}{b^m} \, S_d(\delta)$
\ENDFOR
\RETURN $\vec{q}(\x)=(q_1(\x),\dots,q_s(\x))$
\end{algorithmic}
\end{algorithm}

\section{Calculation of the worst-case error}\label{sec:kernel}

In this section we show how to calculate the infinite sum\RefEq{eq:omega} which appears in the worst-case error formula from \RefLem{lem:wce}. In \RefThm{thm:ndigits} we show that if $x$ can be represented exactly with $n$~digit precision in base~$b$, then $\omega_\alpha(x)$ can be computed in $O(\alpha n)$ operations. Following that, in \RefSec{sec:omega-alpha-computation}, \RefThm{thm:arbitrary-x} will state explicit forms for general $x \in [0,1)$. More importantly, for $b=2$, \RefCol{col:base2} gives explicit forms to compute $\omega_\alpha(x)$ exactly for arbitrary~$x$ and $\alpha=2$ and~$3$ using elementary computer operations.

\subsection{Technical definitions}

Before we can show how to compute $\omega_\alpha(x)$ we need to introduce some technical notation which will be used in the proofs in the next section. 
To motivate the notation we first look at $\omega_\alpha$ after expanding the definitions of $\wal_k(x)$ and $r_\alpha(k)$, see\RefEq{eq:walsh-1d} and\RefEq{eq:ralpha}, and using the non-zero digit expansion,\RefEq{eq:k-base-b-nz}, of $k = \sum_{i=1}^{\nzd{k}} \kappa_{a_i} \, b^{a_i}$, where all $\kappa_{a_i} \ne 0$:
\begin{multline}\label{eq:omega-step1}
  \omega_\alpha(x)
  =
  \sum_{k=1}^\infty r_\alpha(k) \, \wal_k(x)
  \\=
  \sum_{\substack{k=1 \\[0.5mm]
                  k = \sum_{i=1}^{\nzd{k}} \kappa_{a_i} \, b^{a_i}
                 }
  }^\infty
    \prod_{i=1}^{\min(\alpha,\nzd{k})} b^{-(a_i+1)} \, \exp(\twopii \kappa_{a_i} \xi_{a_i+1} / b)
    \;
    \underbrace{
    \prod_{i=\alpha+1}^{\nzd{k}} \exp(\twopii \kappa_{a_i} \xi_{a_i+1} / b)
    }_{\wal_{k'}(x)}
  .
\end{multline}
Due to definition\RefEq{eq:k-base-b-nz} we have $a_1 > \cdots > a_{\nzd{k}} \ge 0$, i.e., $\kappa_{a_1}$ is the most significant base~$b$ digit of~$k$, etc.
The second product in\RefEq{eq:omega-step1} can be seen as $\wal_{k'}(x)$ where $k'$ is defined by $k' = k- \sum_{i=1}^{\min(\alpha,\nzd{k})} \kappa_{a_i} \, b^{a_i}$ and thus $0 \le k' < b^{a_{\min(\alpha,\#k)}}$. Now observe that the sum over all $k \ge 1$ can be expanded into multiple sums over all possible digit expansions for all $k$'s with $r$ digits for $r \ge 1$.
That is, $\nzd{k}$ sums for the $a_i$ together with companioning sums for the $\kappa_{a_i}$ from~$1$ to~$b-1$, i.e.,
\begin{align}\label{eq:expand}
  \sum_{\substack{k=1 \\[0.5mm]
                  k = \sum_{i=1}^{\nzd{k}} \kappa_{a_i} \, b^{a_i}
                 }
  }^\infty
  G(k, x)
  &=
  \sum_{r=1}^\infty
  \;
  \underbrace{
  \sum_{a_1 = r-1 \vphantom{\kappa_{a_1}}}^\infty
  \cdots
  \sum_{a_r = 0}^{a_{r-1}-1}
  }_{\substack{r \text{ sums s.t.}\\[1mm] \infty > a_1 > \cdots > a_r \ge 0}}
  \;
  \underbrace{
  \;
  \sum_{\kappa_{a_1}=1}^{b-1}
  \;\cdots\;
  \sum_{\kappa_{a_r}=1}^{b-1}
  \;
  }_{r \text{ independent sums}}
  \;
  G\!\left( \sum_{i=1}^r \kappa_{a_i} \, b^{a_i}, x \right),
\end{align}
where $G(k,x) = r_\alpha(k) \wal_k(x)$.

To simplify notation and stress the structure in what follows, we define the following \emph{triangular sum operator} which sums over all $M \ge a_1 > \cdots > a_r \ge m$:
\begin{align}\label{eq:sum-operator}
  T_m^M(r)( g )
  &:=
  \underbrace{
    \sum_{a_1=m+r-1}^M \sum_{a_2=m+r-2}^{a_1-1} \cdots \sum_{a_r=m}^{a_{r-1}-1}
  }_{\text{$r$ sums}}
  g(a_1, \ldots, a_r)
  ,
\end{align}
and formally set the zero index sum, i.e., no sums to be taken, to be the identity mapping,
\begin{align*}
  T_m^M(0)(g) &:= g .
\end{align*}
Define the \emph{concatenation} of two such operators as putting the sums next to each other:
\begin{align*}
  (T_{m'}^{M}(t) \, T_{m}^{M'}(r-t))(g)
  &:=
  \underbrace{
  \underbrace{
    \sum_{a_1=m'+t-1 \vphantom{a_{t+1}=m'+(r-t)-1} }^{M} \cdots \sum_{a_t=m' \vphantom{a_{t+1}=m'+(r-t)-1} }^{a_{t-1}-1}
  }_{\text{$t$ sums}}
  \;
  \underbrace{
    \sum_{a_{t+1}=m+(r-t)-1 \vphantom{a_{t+1}=m'+(r-t)-1} }^{M'} \cdots \sum_{a_r=m \vphantom{a_{t+1}=m'+(r-t)-1} }^{a_{r-1}-1}
  }_{\text{$(r-t)$ sums}}
  }_{\text{$r$ sums}}
  g(a_1, \ldots, a_r)
  ,
\end{align*}
i.e., having two independent ranges $M \ge a_1 > \cdots > a_t \ge m'$ and $M' \ge a_{t+1} > \cdots > a_r \ge m$. We remark that although these sums might look haggardly, the interpretation of the sum operator by their summation range is a natural way to reason about it as the following lemma shows.

\begin{lemma}\label{lem:split}
  For any $M \ge n \ge m$ we can split $T_m^M(r)$ into $r+1$ sets of two independent ranges:
  \begin{align*}
    T_m^M(r)
    &=
    \sum_{t=0}^r (T_{n+1}^M(t) \, T_m^n(r-t))
    .
  \end{align*}
\end{lemma}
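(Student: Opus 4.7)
The plan is to prove the identity by interpreting each triangular sum operator through its summation range and then partitioning the set of valid index tuples according to the threshold $n$.

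Recall that $T_m^M(r)(g)$ is an abbreviation for $\sum g(a_1,\ldots,a_r)$ where the sum ranges over all strictly decreasing tuples satisfying $M \ge a_1 > a_2 > \cdots > a_r \ge m$. The key observation is that given any such tuple and a threshold $n$ with $M \ge n \ge m$, there is a unique integer $t \in \{0, 1, \ldots, r\}$ such that $a_1, \ldots, a_t \ge n+1$ and $a_{t+1}, \ldots, a_r \le n$: namely, $t$ is the number of entries strictly greater than $n$, and because the tuple is strictly decreasing these entries must be exactly the first $t$ of them.

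First I would fix such a threshold parameter $t$ and restrict the summation of $T_m^M(r)$ to tuples whose first $t$ entries lie in $[n+1, M]$ and whose remaining $r-t$ entries lie in $[m, n]$. The two groups of indices are then constrained independently: the first group satisfies $M \ge a_1 > \cdots > a_t \ge n+1$ and the second satisfies $n \ge a_{t+1} > \cdots > a_r \ge m$. Comparing with the definition of the concatenated operator, this restricted sum is precisely $(T_{n+1}^M(t) \, T_m^n(r-t))(g)$. Summing over $t = 0, 1, \ldots, r$ then recovers the full sum $T_m^M(r)(g)$, because the partition by the threshold exhausts every decreasing tuple exactly once.

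Finally I would verify the boundary cases. For $t = 0$ the outer operator $T_{n+1}^M(0)$ is by convention the identity, so the contribution reduces to $T_m^n(r)(g)$, which is the sum over all tuples entirely below the threshold. For $t = r$, the inner operator $T_m^n(0)$ is likewise the identity, yielding $T_{n+1}^M(r)(g)$ for tuples entirely above the threshold. Any intermediate $t$ for which either $t > M-n$ or $r-t > n-m+1$ gives an empty sum and contributes zero, which is consistent with the LHS. There is no real analytic obstacle here; the only point requiring care is bookkeeping the index ranges so that the concatenation definition matches the restricted sum exactly, in particular that the lower bound $m+r-1$ in $T_m^M(r)$ correctly decomposes into the lower bounds $n+1+t-1$ and $m+(r-t)-1$ appearing in the two concatenated factors, which follows immediately once one tracks the number of indices in each block.
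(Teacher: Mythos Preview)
Your argument is correct and is essentially the same combinatorial partition the paper uses: interpret $T_m^M(r)$ as a sum over strictly decreasing tuples in $[m,M]$, split the range at the threshold $n$, and group tuples according to how many entries fall above $n$. The paper states this more tersely as distributing $r$ objects into $M-m+1$ positions and splitting the range into two non-overlapping pieces, but the content is identical to what you wrote.
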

\begin{proof}
  Applying $T_m^M(r)$ to a function $g(a_1,\ldots,a_r)$ can be interpreted combinatorially as having to distribute $r$ objects in $M-m+1$ different positions, numbered from $m$ to $M$, which can each hold at most one object and then accumulating the result of applying the function $g$ to this ensemble.
  It is trivial to note that we can split the range in two non-overlapping ranges and consider all partitions of $r$ to distribute the objects over the two ranges.
\end{proof}

Specifically we find the following expansions of this summation operator for $r=1,2,3$:
\begin{subequations}\label{eq:expansions}
\begin{align}
  \label{eq:expansion-1}
  T_0^\infty(1) &= T_0^{n-1}(1) + T_{n}^\infty(1), \\
  \label{eq:expansion-2}
  T_0^\infty(2) &= T_0^{n-1}(2) + T_{n}^\infty(1) \, T_0^{n-1}(1) + T_{n}^\infty(2) , \\
  \label{eq:expansion-3}
  T_0^\infty(3) &= T_0^{n-1}(3) + T_{n}^\infty(1) \, T_0^{n-1}(2) + T_{n}^\infty(2) \, T_0^{n-1}(1) + T_{n}^\infty(3) .
\end{align}
\end{subequations}

As we will apply \RefLem{lem:split} to a product function, $g(a_1,\ldots,a_r) = g(a_1) \cdots g(a_r)$ it is useful to obtain the following result.
\begin{lemma}
If the function $g(a_1,\ldots,a_r)$ is of product form $g_1(a_1) \cdots g_r(a_r)$, then $T_0^{n-1}(r)(g)$, with $n$ finite, can be calculated in $O(n r)$.
\end{lemma}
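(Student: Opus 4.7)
The plan is to prove the statement by exhibiting an explicit dynamic-programming scheme and bounding its operation count. The key observation is that when $g$ factors as $g_1(a_1)\cdots g_r(a_r)$, we can compute the inner summations once and reuse them across all values of the outer index.

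First, I would introduce auxiliary partial sums indexed by the depth $k$ of nested summation and the maximum allowed value $m$ of the outermost index at that depth. Concretely, for $1 \le k \le r$ and $k-1 \le m \le n-1$, define
\begin{equation*}
  S_k(m) \;:=\; T_0^{m}(k)\bigl(g_{r-k+1}\,g_{r-k+2}\cdots g_r\bigr),
\end{equation*}
so that the target quantity is $T_0^{n-1}(r)(g) = S_r(n-1)$. The idea is that $S_k(m)$ captures the innermost $k$ summations with the outermost of them capped at $m$.

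Next I would derive a forward recurrence. Peeling the outermost sum off the definition of $S_k(m)$ and using the product structure of $g$ to pull the factor $g_{r-k+1}(a_1)$ outside of the remaining $(k-1)$-fold sum, one obtains
\begin{equation*}
  S_k(m) \;=\; \sum_{a=k-1}^{m} g_{r-k+1}(a)\,S_{k-1}(a-1),
\end{equation*}
with the convention $S_0 \equiv 1$, and for $k=1$ simply $S_1(m) = \sum_{a=0}^{m} g_r(a)$. Rewriting in incremental form gives
\begin{equation*}
  S_k(m) \;=\; S_k(m-1) \,+\, g_{r-k+1}(m)\,S_{k-1}(m-1)
  \qquad (k\ge 2),
\end{equation*}
and $S_1(m) = S_1(m-1) + g_r(m)$, so that every entry can be produced from previously computed entries in $O(1)$ arithmetic operations plus one evaluation of a $g_j$.

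Finally I would tally the cost. For each fixed $k$ we compute $S_k(m)$ at no more than $n$ values of $m$, each at $O(1)$ cost given the previously computed level, for $O(n)$ work per level. Summing over the $r$ levels $k=1,\ldots,r$, the total cost is $O(nr)$, and only the previous array needs to be stored, so memory is $O(n)$. The main (and essentially only) obstacle is the bookkeeping of the summation ranges and the shift from $m$ to $a-1$ when descending one level; once the auxiliary family $S_k(m)$ is set up, the recurrence and the complexity bound are immediate.
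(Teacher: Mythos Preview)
Your argument is correct and is essentially the same as the paper's: the paper simply presents the recurrence as a short algorithm (an outer loop over the innermost index together with an inner loop updating the running sums $S_r,\ldots,S_1$) and observes that counting its operations gives $O(nr)$. Your quantities $S_k(m)$ are exactly the intermediate values that algorithm maintains, with $S_k(m)$ corresponding to the paper's $S_{r-k+1}$ after the outer-loop iteration $a_r=m-k+1$; the only difference is the indexing convention.
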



The proof of the lemma follows from the number of operations needed in Algorithm~\ref{alg:calcsums}.

\begin{algorithm}
\caption{Compute $S_1 = T_0^{n-1}(r)(g_1(a_1) \cdots g_r(a_r))$ in $O(n r)$ operations}\label{alg:calcsums}
\begin{algorithmic}
\STATE Initialize $S_1=0$, \ldots, $S_r=0$
\FOR{$a_r=0$ \TO $n-r$}
  \STATE $S_r = S_r + g_r(a_r)$
  \FOR{$t=1$ \TO $r-1$}
    \STATE $S_{r-t} = S_{r-t} + S_{r-t+1} \, g_{r-t}(a_r + t)$
  \ENDFOR
\ENDFOR
\RETURN $S_1$
\end{algorithmic}
\end{algorithm}

At the end of this algorithm we have the post conditions:
\begin{align*}
  S_t
  &=
  T_0^{n-t}(r-t+1)\!\left( \prod_{i=t}^r g_i(a_i) \right)
  ,
  &
  \text{for } t = 1, \ldots, r.
\end{align*}
With a slight modification we can calculate all values of
\begin{align}
  \label{eq:allS}
  S_t
  &=
  T_0^{n-1}(r-t+1)\!\left( \prod_{i=t}^r g_i(a_i) \right)
  ,
  &
  \text{for } t = 1, \ldots, r.
\end{align}
For this we just let the outer loop run up to $n-1$ and make a modification in the inner loop to only conditionally update the value of $S_{r-t}$ as long as $a_r < n-t$.
The modified algorithm can be found in \RefAlg{alg:calcsums-all}.
This algorithm is still $O(n r)$.

\begin{algorithm}
\caption{Compute all $S_t = T_0^{n-1}(r-t+1)(g_t(a_t) \cdots g_r(a_r))$, for $t=1,\ldots,r$, in $O(n r)$}\label{alg:calcsums-all}
\begin{algorithmic}
\STATE Initialize $S_1=0$, \ldots, $S_r=0$
\FOR{$a_r=0$ \TO $n-1$}
  \STATE $S_r = S_r + g_r(a_r)$
  \FOR{$t=1$ \TO $\min(r,n-a_r)-1$}
    \STATE $S_{r-t} = S_{r-t} + S_{r-t+1} \, g_{r-t}(a_r + t)$
  \ENDFOR
\ENDFOR
\RETURN $(S_1, \ldots, S_r)$
\end{algorithmic}
\end{algorithm}

\subsection{A general algorithm for $x$ having a fixed base~$b$ precision of~$n$}\label{sec:omega-alpha-computation}

We now consider calculating $\omega_\alpha(x)$ in base $b$ for $x \in [0,1)$ which can be represented exactly with $n$~digit precision in base~$b$: $x = (0.\xi_1 \xi_2 \ldots \xi_n)_b = \sum_{i=1}^n \xi_i \, b^{-i}$.
That is, $x$ is actually a rational number $v/b^n$, $0 \le v < b^n$.
This is exactly the situation that occurs in the component-by-component construction of \RefSec{sec:cbc}, as the $v_n$ function\RefEq{eq:vn} exactly maps the Laurent series over $\F_b((\x^{-1}))$ to rationals $v/b^n$ with $0 \le v < b^n$.

\begin{theorem}\label{thm:ndigits}
  Let $\alpha, b \ge 2$ be integers. Then for any $x = v b^{-n}$ with $n \ge 1$ and $0 \le v < b^n$, the value of $\omega_\alpha(v b^{-n})$ can be computed in at most $O(\alpha n)$ operations as follows: calculate the vectors
  \begin{align*}
    \vec{T}(x)
    &\hphantom{:}=
    ( T_{\alpha-1}, \ldots, T_1 )
    \\
    &:= 
    \left(
      T^{n-1}_{0}(\alpha-1)\left( \prod_{i=1}^{\alpha-1} b^{-(a_i+1)} z(x,a_i) \right),
      \;\ldots,\,
      T^{n-1}_0(1)(b^{-(a_{\alpha-1}+1)} z(x,a_{\alpha-1}))
     \right)
    ,
    \\
    \tilde{\vec{T}}(x)
    &\hphantom{:}=
    ( \tilde{T}_\alpha, \ldots, \tilde{T}_1 )
    \\
    &:= 
    \left(
      T^{n-1}_0(\alpha)\left( b^{a_\alpha}[a_{\alpha} < \beta(x)-1] \prod_{i=1}^\alpha b^{-(a_i+1)} z(x,a_i) \right)
      , 
      \;\ldots,\,
    \right.
    \\ 
    & 
    \hspace{4.8cm}
    \left. \vphantom{\left( \prod_{i=1}^\alpha \right)}
      T^{n-1}_0(1)\left( b^{a_\alpha}[a_{\alpha} < \beta(x)-1] b^{-(a_\alpha+1)} z(x,a_\alpha) \right)
    \right)
    ,
  \end{align*}
  which can both be computed by \RefAlg{alg:calcsums-all}, and set
  \begin{align*}
    \vec{C}
    &=
    (C_0,\ldots,C_{\alpha-1})
    := 
    \left( b^{-nt} \prod_{i=1}^t \frac{b-1}{b^i-1} \right)_{t=0,\ldots,\alpha-1},
    \\
    \bar{\vec{C}}
    &=
    (\bar{C}_0, \bar{C}_1, \ldots,\bar{C}_{\alpha-1})
    := 
    ( C_0, C_0 + C_1, \ldots, C_0 + \cdots + C_{\alpha-1})
    ,
  \end{align*}
  where $C_0 = \bar{C}_0 = 1$,
  then for $0 \le v < b^n$
  \begin{align*}
    \omega_\alpha(vb^{-n})
    &=
    \begin{cases}
      \bar{\vec{C}}_{0:\alpha-2} \cdot \vec{T}(vb^{-n}) + (\bar{C}_{\alpha-1} - 1) + \vec{C} \cdot \tilde{\vec{T}}(vb^{-n})
        & \text{if } 0 < v < b^n, \\[1mm]
      \displaystyle \sum_{r=1}^{\alpha-1} \prod_{i=1}^r \frac{b-1}{b^i-1} +
        \frac{b-1}{b^\alpha-b} \prod_{i=1}^{\alpha-1} \frac{b-1}{b^i-1} & \text{if } v = 0 ,
    \end{cases}
  \end{align*}
  where $\vec{a} \cdot \vec{b}$ denotes the dot product and for $x = (0.\xi_1 \xi_2 \ldots \xi_n)_b$ we set
  \begin{align*}
    z(x, a_i)
    &=
    \begin{cases}
      b-1 & \text{if } \xi_{a_i+1} = 0, \\
      -1  & \text{if } \xi_{a_i+1} \ne 0 ,
    \end{cases}
    &\text{and}&&
    \beta(x)
    &=
      -\floor{\log_b(x)}
    .
  \end{align*}
\end{theorem}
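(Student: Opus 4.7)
My plan is to evaluate the Walsh-series definition of $\omega_\alpha(x)$ explicitly and reduce it to the stated closed form by three successive simplifications: performing the inner sums over the nonzero digits $\kappa_{a_i}$, collapsing the outer sum over $r=\nzd{k}$ whenever $r\ge\alpha$ by a generating-function argument, and finally splitting each remaining infinite index range at position~$n$ so as to exploit the fact that $x=v/b^n$ has no digits beyond position~$n$.

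I would start from the expansion\RefEq{eq:omega-step1} rewritten using\RefEq{eq:expand}. The resulting inner sums over $\kappa_{a_i}\in\{1,\ldots,b-1\}$ are independent, and each one evaluates to $\sum_{\kappa=1}^{b-1}\exp(2\pi i\,\kappa\xi_{a_i+1}/b)$, which equals $b-1$ when $\xi_{a_i+1}=0$ and $-1$ otherwise---exactly $z(x,a_i)$. This yields
\begin{align*}
  \omega_\alpha(x)
  =
  \sum_{r=1}^\infty T_0^\infty(r)\!\left(\prod_{i=1}^{\min(r,\alpha)} b^{-(a_i+1)} z(x,a_i)\prod_{i=\min(r,\alpha)+1}^r z(x,a_i)\right).
\end{align*}
I would then split into the cases $r<\alpha$ (no tail) and $r\ge\alpha$. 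For the latter, exchanging the order of summation and combining the sums over $r\ge\alpha$ and over the tail $a_{\alpha+1}>\cdots>a_r$ yields $\prod_{j=0}^{a_\alpha-1}(1+z(x,j))$; since $1+z(x,j)$ equals $b$ when $\xi_{j+1}=0$ and $0$ otherwise, this product equals $b^{a_\alpha}$ exactly when $\xi_1=\cdots=\xi_{a_\alpha}=0$ (i.e.\ when $a_\alpha\le\beta(x)-1$) and vanishes otherwise, which reproduces the $b^{a_\alpha}[\,\cdot\,]$ factor appearing inside $\tilde{T}_s$.

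Next I would apply \RefLem{lem:split} at the splitting point $n$ to each surviving $T_0^\infty(\cdot)$, using that $z(x,a)=b-1$ for every $a\ge n$. The ``high'' block $T_n^\infty(t)\!\bigl(\prod_{i=1}^t b^{-(a_i+1)}(b-1)\bigr)$ is then a pure geometric sum; the substitution $a_i=n+c_i$ with $c_1>\cdots>c_t\ge 0$ separates the summand into $t$ independent geometric series and reduces the block to exactly $C_t=b^{-nt}\prod_{i=1}^t(b-1)/(b^i-1)$. For $r<\alpha$ the complementary ``low'' block is $T_0^{n-1}(r-t)$ and contributes $T_{r-t}$; for $r\ge\alpha$ the indicator from the tail together with $\beta(x)\le n$ automatically forces $t<\alpha$, and the low block contributes $\tilde{T}_{\alpha-t}$. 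Summing over all valid $(r,t)$ and re-indexing by $s=r-t$ (respectively $s=\alpha-t$), the coefficient of each $T_s$ telescopes to $\bar{C}_{\alpha-1-s}$, the purely-high contribution combines into $\bar{C}_{\alpha-1}-1$ (the $-1$ absorbing the disallowed $r=0$ summand), and the coefficient of $\tilde{T}_s$ equals $C_{\alpha-s}$, which assembles into the theorem's dot-product expression.

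Finally, for $v=0$ I would proceed directly: $\wal_k(0)=1$ for every $k$, so $\omega_\alpha(0)=\sum_{k\ge 1}r_\alpha(k)$, and splitting by $\nzd{k}<\alpha$ versus $\nzd{k}\ge\alpha$ and applying the same geometric substitution produces the displayed closed form. The complexity bound $O(\alpha n)$ then follows because $\vec{T}(x)$ and $\tilde{\vec{T}}(x)$ can each be computed by one invocation of \RefAlg{alg:calcsums-all} in time $O(\alpha n)$, the vectors $\vec{C},\bar{\vec{C}}$ are precomputed in $O(\alpha)$, and the two dot products cost $O(\alpha)$. The main obstacle I expect is the combinatorial bookkeeping in the nested splits: every index is classified simultaneously as ``special'' (among the first $\alpha$) or ``tail'', and as ``high'' ($\ge n$) or ``low'' ($<n$), and one must verify that the contributions from all four combinations assemble cleanly, so that the outer coefficients telescope to the advertised $\bar{C}_j$ and the boundary condition on the indicator remains consistent with the range of $T_0^{n-1}$.
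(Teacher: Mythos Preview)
Your proposal is correct and follows essentially the same route as the paper: split $\omega_\alpha$ by $\nzd{k}<\alpha$ versus $\nzd{k}\ge\alpha$, reduce the digit sums to the $z(x,a_i)$ factors, apply \RefLem{lem:split} at the cut point~$n$, evaluate the ``high'' blocks $T_n^\infty(t)$ as the constants $C_t$, and then read off the dot-product form; the $x=0$ case and the $O(\alpha n)$ count are handled identically. The only cosmetic difference is how the tail in case $\nzd{k}\ge\alpha$ is collapsed: the paper sums $\sum_{k'=0}^{b^{a_\alpha}-1}\wal_{k'}(x)$ directly via character orthogonality, whereas you sum over $r\ge\alpha$ and all tail positions to obtain $\prod_{j=0}^{a_\alpha-1}(1+z(x,j))$ via the subset--product identity---these are the same computation in different notation.
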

\begin{proof}
We start from expression\RefEq{eq:omega-step1}. For ease of manipulation we consider two different cases of the base~$b$ expansions for integer~$k > 0$:
  \begin{enumerate}
    \item Integers $k$ which have between $1$ and $(\alpha-1)$ non-zero digits in base~$b$:
      \begin{align*}
        k &= \sum_{i=1}^{\nzd{k}} \kappa_{a_i} \, b^{a_i}
        ,
        \qquad \text{where } 1 \le \nzd{k} \le \alpha-1.
      \end{align*}
    \item Integers $k$ which have $\alpha$ or more non-zero digits in base~$b$:
      \begin{align*}
        k &= \sum_{i=1}^{\nzd{k}} \kappa_{a_i} \, b^{a_i}
           = \sum_{i=1}^\alpha \kappa_{a_i} \, b^{a_i} + k'
        ,
        \qquad \text{where } \nzd{k} \ge \alpha \text{ and } 0 \le k' < b^{a_\alpha} .
      \end{align*}
  \end{enumerate}
  As such we consider, for $0 < x < 1$,
  \begin{multline*}
    \omega_\alpha(x)
    =
    \sum_{\substack{k=1 \\[0.5mm] \nzd{k} < \alpha \\[0.5mm] k = \sum_{i=1}^{\nzd{k}} \kappa_{a_i} \, b^{a_i}}}^\infty
      \prod_{i=1}^{\nzd{k}} b^{-(a_i + 1)} \exp(\twopii \kappa_{a_i} \xi_{a_i+1} / b)
    \\[-2mm] +
    \sum_{\substack{k=1 \\[0.5mm] \nzd{k} \ge \alpha \\[0.5mm] k = \sum_{i=1}^{\alpha} \kappa_{a_i} \, b^{a_i} + k' \\[0.5mm] 0 \le k' < b^{a_\alpha}}}^\infty
      \wal_{k'}(x)
      \prod_{i=1}^\alpha b^{-(a_i + 1)} \exp(\twopii \kappa_{a_i} \xi_{a_i+1} / b)
    .
  \end{multline*}
  We will now expand these outer sums as in\RefEq{eq:expand}, but first define
  \begin{align}\label{eqn:zhelp}
    z(x, a_i)
    &:=
    \sum_{\kappa_{a_i}=1}^{b-1} \exp(\twopii \kappa_{a_i} \xi_{a_i+1} / b)
    =
    \begin{cases}
      b-1 & \text{if } \xi_{a_i+1} = 0, \\
      -1  & \text{if } \xi_{a_i+1} \ne 0 ,
    \end{cases}
  \end{align}
  to move all the independent $\kappa_{a_i}$ sums, cf.\RefEq{eq:expand}, into the product function.
  Further, denote by $\beta(x)$ the power of $b^{-1}$ of the first non-zero digit in the base~$b$ expansion of $x \in [0,1)$, then the sum over $k'$ for case 2 becomes
  \begin{align*}
    \sum_{k'=0}^{b^{a_\alpha}-1} \wal_{k'}(x)
    &=
    \begin{cases}
      b^{a_\alpha} & \text{if } a_\alpha < \beta(x)-1 \text{, i.e., } x = (0.\underbrace{0 \ldots\ldots 0}_{\text{at least $a_\alpha$}}***\ldots)_b,  \\
      0        & \text{otherwise}
    \end{cases}
    \\&=:
    b^{a_\alpha} [a_\alpha < \beta(x)-1],
  \end{align*}
  where the last line uses Iverson notation.
  Introducing the sum operator\RefEq{eq:sum-operator} we obtain
  \begin{align*}
    \omega_\alpha(x)
    &=
    \sum_{r=1}^{\alpha-1} T_0^\infty(r)\!\!\left(
      \prod_{i=1}^{r} b^{-(a_i+1)} z(x,a_i)
    \right)
    +
    T_0^\infty(\alpha)\!\!\left(
      b^{a_\alpha} [a_\alpha < \beta(x)-1]
      \prod_{i=1}^{\alpha} b^{-(a_i+1)} z(x,a_i)
    \right)
    .
  \end{align*}
  Since our function is a product function, it is convenient to only deal with the operators, which then shortens the notation.

  We now deal with the two cases separately.
  For case~1, $1 \le r \le \alpha - 1$, we apply \RefLem{lem:split} and manipulate the following expression
  \begin{align*}
    \sum_{r=1}^{\alpha - 1} T_0^\infty(r)
    &=
    \sum_{r=1}^{\alpha - 1}
    \sum_{t=0}^r
      T_n^\infty(r-t) \,
      T_0^{n-1}(t)
    =
    \sum_{t=1}^{\alpha - 1}
      \left( \sum_{r=t}^{\alpha - 1} T_n^\infty(r-t) \right) T_0^{n-1}(t)
    + \sum_{r=1}^{\alpha-1} T_n^\infty(r)
    .
   \end{align*}
   By assumption of the $n$~digit base~$b$ precision of~$x$ the $T_n^\infty$ sums do not depend on $x$.
   As we show next, they can be calculated off line in closed form.
   That means we are left to deal with the $T_0^{n-1}(t)(g_{r-t+1}(a_{r-t+1}) \cdots g_r(a_r))$ for $t=1,\ldots,\alpha-1$.
   We can use \RefAlg{alg:calcsums} for each of these terms, but as they are nested, we can use \RefAlg{alg:calcsums-all} to calculate them all at once in time $O(\alpha n)$ upon calculating $T_0^{n-1}(\alpha-1)$.
   The $T_n^\infty$ sums are given by:
%
  \begin{align}
    \nonumber
    T_n^\infty(t)\!\left( \prod_{i=1}^t b^{-(a_i+1)} (b-1) \right)
    &=
    (b-1)^t b^{-t}
    \sum_{a_1=n+t-1}^\infty b^{-a_1}
    \sum_{a_2=n+t-2}^{a_1-1} b^{-a_2}
    \cdots
    \sum_{a_t=n}^{a_{t-1}-1} b^{-a_t}
    \\
    \nonumber
    &=
    (b-1)^t b^{-t}
    \sum_{a_t=n}^\infty b^{-a_t}
    \cdots
    \sum_{a_2=a_3+1}^{\infty} b^{-a_2}
    \sum_{a_1=a_2+1}^{\infty} b^{-a_1}
    \\
    \label{eq:constants}
    &=
    b^{-nt} \prod_{i=1}^t \frac{b-1}{b^i-1}
    .
  \end{align}

   For case~2, $\nzd{k} \ge \alpha$, we can also apply \RefLem{lem:split} to obtain
   \begin{align*}
     T_0^\infty(\alpha)
     &=
     \sum_{t=0}^\alpha T_n^\infty(t) T_0^{n-1}(\alpha-t)
     \\&=
     T_0^{n-1}(\alpha) + T_n^\infty(1) T_0^{n-1}(\alpha-1) + \cdots +
     T_n^\infty(\alpha-1) T_0^{n-1}(1) + T_n^\infty(\alpha)
     ,
   \end{align*}
   which is applied to the function
   \begin{align*}
     b^{a_\alpha} [a_\alpha < \beta(x)-1]
     \left( \prod_{i=1}^{\alpha} b^{-(a_i+1)} z(x,a_i) \right)
     .
   \end{align*}
   The $T_n^\infty$ sums here become
   \begin{gather*}
     T_n^\infty(t)\left(\prod_{i=1}^t b^{-(a_i+1)} (b-1) \right)
     =
     b^{-nt} \prod_{i=1}^t \frac{b-1}{b^i-1}
     ,
     \qquad
     \text{for $t < \alpha$,}
     \\
     \text{and}\qquad
     T_n^\infty(\alpha)\left(b^{a_\alpha} [a_\alpha < \beta(x)-1] \prod_{i=1}^\alpha b^{-(a_i+1)} (b-1)\right)
     =
     0
     .
   \end{gather*}
For $x\ne0$ the condition $[a_\alpha < \beta(x)-1]$ makes it such that $T_n^\infty(\alpha) = 0$ as $a_\alpha \ge n$ (and $\beta(x) \le n$ by assumption).
The other $T_n^\infty$ values are the same as for case~1, and we can use the closed form\RefEq{eq:constants}. Also here we use \RefAlg{alg:calcsums-all} to calculate all the sums $T_0^{n-1}(\alpha-t)$ in $O(\alpha n)$ upon calculating $T_0^{n-1}(\alpha)$.

When $x=0$ there is no need to consider splitting at a given $n$. To obtain $T_0^\infty(\alpha)$ we can use a similar derivation as for\RefEq{eq:constants} to  obtain a closed form:
   \begin{align*}
     T_0^\infty(\alpha)\left(b^{a_\alpha} \prod_{i=1}^\alpha b^{-(a_i+1)} (b-1)\right)
     &=
     \sum_{a_\alpha=0}^\infty b^{-1} (b-1) \; T_{a_\alpha+1}^\infty(\alpha-1)\left(\prod_{i=1}^{\alpha-1} b^{-(a_i+1)} (b-1)\right)
     \\
     &=
     \sum_{a_\alpha=0}^\infty b^{-1} (b-1) \; b^{-(a_\alpha+1)(\alpha-1)} \prod_{i=1}^{\alpha-1} \frac{b-1}{b^i-1}
     \\
     &=
     \frac{b-1}{b^\alpha-b} \prod_{i=1}^{\alpha-1} \frac{b-1}{b^i-1}
     .
   \end{align*}
Again\RefEq{eq:constants} can be used to calculate the $T_0^\infty(r)$ for $r=1,\ldots,\alpha-1$. This completes the proof.
\end{proof}

\subsection{Explicit forms for arbitrary $x$ and small $\alpha$}

\RefThm{thm:ndigits} uses the fact that at most the first $n$ digits of the coordinates of the polynomial lattice rule can be non-zero; it is hence not surprising that the resulting computational complexity depends on $n$.
Here we take a similar approach, but explicitly look at the non-zero digits of~$x$; this will turn out to be a favorable approach in case of $b=2$, for which we find explicit expressions in \RefCol{col:base2}.
We will use the following similar notation as was set up in the beginning of \RefSec{sec:funspace}: Let the non-zero digits base~$b$ expansion of $x = (0.\xi_1 \xi_2 \ldots)_b \in [0,1)$ be given by
\begin{align*}
  x
  &=
  \sum^{\nzd{x}}_{i=1} \xi_{a_i} b^{-a_i}
  ,
\end{align*}
where $1 \leq a_1 < \dots < a_{\nzd{x}}$, $\xi_{a_i} \in \left\{ 1, \ldots, b-1 \right\}$. In particular, we will see that the power of $b^{-1}$ for the most significant digit of $x$, i.e., $a_1$, plays a pivotal role.
For $x=0$ we set $a_1 = \infty$ and $\#x=0$.

\begin{theorem}\label{thm:arbitrary-x}
  For $x \in [0,1)$ with non-zero digit base~$b$ expansion
  \begin{align*}
    x
    &=
    \sum^{\nzd{x}}_{i=1} \xi_{a_i} b^{-a_i}
    ,
    &
    1 \leq a_1 < \dots < a_{\nzd{x}}, \quad \xi_{a_i} \in \{1,\ldots,b-1\},
  \end{align*}
  we have
  \begin{align*}
    \omega_2(x)
    &=
    s_1(x) + \tilde{s}_2(x)
    ,
    \\
    \omega_3(x)
    &=
    s_1(x) + s_2(x) + \tilde{s}_3(x)
    ,
  \end{align*}
  where
  \begin{align*}
    s_1(x)
    &:=
    1 - b \sum_{j=1}^{\nzd{x}} b^{-a_j}
    ,
    \\
    s_2(x)
    &:=
    \frac{1}{b+1}
    -
    b (b-2)
    \frac{1}{2} \left(
      \left( \sum_{j=1}^{\nzd{x}} b^{-a_j} \right)
      \left( \sum_{j=1}^{\nzd{x}} b^{-a_{j}} \right)
      - \sum_{j=1}^{\nzd{x}} b^{-2a_j}
    \right)
    \\
    &\qquad
    -
    b (b-1)
    \left( \frac{1}{b-1} - \sum_{j=1}^{\nzd{x}} b^{-a_j} \right)
    \left( \sum_{j=1}^{\nzd{x}} b^{-a_j} \right),
\end{align*}
and for $x \ne 0$ we have
\begin{align*}
    \tilde{s}_2(x)
    &:=
    b^{-1} - 2 b^{-a_1} - b^{-(a_1+1)} - ( a_1 b - a_1 - b ) \sum_{j=1}^{\nzd{x}} b^{-a_j}
    ,
    \\
    \tilde{s}_3(x)
    &:=
  \Bigl(b^{-a_1+1}(a_1b - a_1 - b + 2) -1 \Bigr)
    \sum_{j=2}^{\nzd{x}} b^{-a_j}
  - b^{-1} ( a_1 b - a_1 - b + 1 ) b^{-2 a_1} s_1(b^{a_1} x \bmod{1})
  \\
  &\qquad
    + b^{-1} ( a_1 b - a_1 - b ) b^{-2a_1} s_2(b^{a_1} x \bmod{1})
    ,
  \end{align*}
  where $b^{a_1} x \bmod{1} = b^{a_1} x - \xi_{a_1}$.
  For $x=0$ we set $\tilde{s}_2(0) = b^{-1}$ and $\tilde{s}_3(0) = b^{-1}(b+1)^{-2}$.
\end{theorem}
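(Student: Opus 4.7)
The plan is to start from the identity established during the proof of \RefThm{thm:ndigits}, namely
\begin{align*}
  \omega_\alpha(x)
  &=
  \sum_{r=1}^{\alpha-1} T_0^\infty(r)\!\left( \prod_{i=1}^r b^{-(a_i+1)} z(x, a_i) \right)
  + T_0^\infty(\alpha)\!\left( b^{a_\alpha}[a_\alpha < \beta(x)-1] \prod_{i=1}^\alpha b^{-(a_i+1)} z(x, a_i) \right),
\end{align*}
which was derived from expansion\RefEq{eq:omega-step1} and holds for every $x \in [0,1)$ \emph{before} any finite-precision assumption is imposed. I would then specialise to $\alpha = 2$ and $\alpha = 3$ and evaluate each operator in closed form; the first sum yields the $s_r(x)$ pieces and the second yields $\tilde{s}_\alpha(x)$.

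For the $s_r(x)$ terms ($r < \alpha$) I would exploit the symmetry of $T_0^\infty(r)$ via elementary-symmetric to power-sum identities, e.g.\ $2\,T_0^\infty(2)(f(a_1)f(a_2)) = (\sum_a f(a))^2 - \sum_a f(a)^2$. Each resulting one-dimensional sum $\sum_{a \ge 0} b^{-\lambda(a+1)} z(x,a)^\lambda$ splits over the digit positions of $x$: positions with $\xi_i = 0$ contribute via $z = b-1$ and $\sum_{a \ge 0} b^{-\lambda(a+1)} = 1/(b^\lambda - 1)$, while the (finitely many) positions $a_j$ with $\xi_{a_j} \ne 0$ contribute via $z = -1$ and feed into the correction sums $\sum_j b^{-a_j}$ and $\sum_j b^{-2a_j}$. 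Collecting and simplifying then produces the stated formulas for $s_1(x)$ and $s_2(x)$.

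For $\tilde{s}_\alpha(x)$ with $x \ne 0$, the indicator $[a_\alpha < \beta(x)-1]$ confines the smallest index to $a_\alpha \in \{0,\ldots,a_1 - 2\}$, on which $\xi_{a_\alpha+1} = 0$ forces $z(x,a_\alpha) = b-1$ and the factor $b^{a_\alpha} b^{-(a_\alpha+1)} = b^{-1}$ becomes independent of $a_\alpha$. For $\alpha = 2$ the remaining inner sum over $a_1 > a_\alpha$ is a truncated version of $s_1$, obtained by subtracting the first $(a_\alpha+1)$ (all zero-digit) terms of $s_1(x)$; summing over $a_\alpha \in \{0,\ldots,a_1 - 2\}$ and simplifying yields $\tilde{s}_2(x)$. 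For $\alpha = 3$, after the index shift $a_i \mapsto a_i - a_1$, the remaining double sum over $a_1 > a_2 > a_\alpha$ matches the $s_1$- and $s_2$-type combinations at the shifted point $b^{a_1}x \bmod 1 = b^{a_1}x - \xi_{a_1}$, which is precisely why these shifted kernels appear in the stated formula for $\tilde{s}_3(x)$; Lemma~\ref{lem:split} provides the formal splitting that makes this reduction rigorous.

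The $x = 0$ case is separate but easy: $\beta(0) = \infty$, the indicator becomes vacuous, and every $z(0,a) = b-1$, so both sums collapse to the closed-form geometric expressions already computed as\RefEq{eq:constants} in the proof of \RefThm{thm:ndigits}, giving the stated constants $\tilde{s}_2(0) = b^{-1}$ and $\tilde{s}_3(0) = b^{-1}(b+1)^{-2}$. I expect the main obstacle to be the $\alpha = 3$ bookkeeping: correctly tracking how the indicator forces digits of $x$ below position $a_1$ to vanish, cleanly translating the inner two-fold shifted sum into the kernels of the shifted point, and simplifying the resulting $a_1$-linear coefficients $(a_1 b - a_1 - b + c)$ that arise from the cardinality of the set $\{0,\ldots,a_1-2\}$.
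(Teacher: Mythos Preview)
Your plan is correct and follows the paper's approach almost exactly: both start from the same decomposition $\omega_\alpha(x)=\sum_{r=1}^{\alpha-1}s_r(x)+\tilde{s}_\alpha(x)$ obtained in the proof of \RefThm{thm:ndigits}, and the paper, like you, only spells out $s_1$ and $s_2$ in detail while declaring that $\tilde{s}_2$ and $\tilde{s}_3$ ``can be obtained similarly.'' The one genuine methodological difference is in $s_2$: the paper does a direct four-case split according to whether each of the two relevant digits $\xi_{\ell'+1},\xi_{\ell+1}$ is zero or non-zero, and the three displayed terms of $s_2$ in the statement are literally the ``both zero'' baseline $1/(b+1)$, the ``both non-zero'' correction, and the combined ``exactly one non-zero'' correction. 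Your Newton-identity route $2\,T_0^\infty(2)(f f)=(\sum f)^2-\sum f^2$ is equally valid and arguably cleaner, but it produces $s_2=\tfrac12\bigl(s_1(x)^2-\sum_a b^{-2(a+1)}z(x,a)^2\bigr)$, which you then have to expand and regroup to match the stated three-term form; the paper's case analysis lands on that form without any rearrangement. For $\tilde{s}_3$, your index-shift idea (recognising the inner sums as $s_1$ and $s_2$ evaluated at $b^{a_1}x\bmod 1$) is exactly the mechanism that explains why those shifted kernels appear, and the paper gives no competing derivation to compare against.
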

\begin{proof}
  We start in exactly the same way as in \RefThm{thm:ndigits}, that is, we split $\omega_\alpha(x)$ into $\alpha$ parts (cf.\ the $\alpha-1$ parts in case~1 plus the case~2 case):
  \begin{align*}
    \omega_\alpha(x)
    =
    \sum_{k=1}^\infty r_\alpha(k) \, \wal_k(x)
    &=
    \sum_{r=1}^{\alpha-1} s_r(x) + \tilde{s}_\alpha(x)
    ,
  \end{align*}
  where $s_r(x)$ contains all $k$ with exactly $r$ digits non-zero and $\tilde{s}_\alpha(x)$ contains all $k$ with at least $\alpha$ digits non-zero.
We only show the derivation of the formulae for $s_1$ and $s_2$ as examples.
The ones for $\tilde s_2$ and $\tilde s_3$ can be obtained similarly.
With $z$ as in \eqref{eqn:zhelp} we find 
\begin{align*}
  s_1(x)
  &=
  \sum_{\ell=0}^\infty
  b^{-(\ell+1)} z(x, \ell)
  \\
  &=
  (b-1)
  \sum_{\ell=0}^\infty
  b^{-(\ell+1)}
  -
  ((b-1) + (-1))
  \sum_{\ell=0}^\infty
  b^{-(\ell+1)} [\xi_{\ell+1}\ne 0]
  \\
  &=
  1
  -
  b
  \sum_{j=1}^{\nzd{x}} b^{-a_j}
  .
\end{align*}
Likewise for $s_2$:
\begin{align}
  \nonumber
  s_2(x)
  &=
  \sum_{\ell'=0}^\infty
  b^{-(\ell'+1)} z(x, \ell')
  \sum_{\ell=\ell'+1}^\infty
  b^{-(\ell+1)} z(x, \ell)
  \\ \nonumber
  &=
  \frac{1}{b+1}
  \\
  &\quad \label{eq:line2} \tag{*}
  -
  b (b-2) \sum_{\ell'=0}^\infty b^{-(\ell'+1)} [\xi_{\ell'+1}\ne 0]
          \sum_{\ell=\ell'+1}^\infty b^{-(\ell+1)} [\xi_{\ell+1}\ne 0]
  \\
  &\quad \label{eq:line3} \tag{**}
  -
  b (b-1) \sum_{\ell'=0}^\infty b^{-(\ell'+1)} [\xi_{\ell'+1}\ne 0]
          \sum_{\ell=\ell'+1}^\infty b^{-(\ell+1)} [\xi_{\ell+1}= 0]
  \\
  &\quad \label{eq:line4} \tag{**}
  -
  b (b-1) \sum_{\ell'=0}^\infty b^{-(\ell'+1)} [\xi_{\ell'+1}= 0]
          \sum_{\ell=\ell'+1}^\infty b^{-(\ell+1)} [\xi_{\ell+1}\ne 0]
  .
\end{align}
  This is a combinatorial formulation in terms of the possibilities for the digits of $x$.
  The two last lines, marked by\RefEq{eq:line3}, can be combined and interpreted as summing over all possible pairs of digits of $x$ of which exactly one is non-zero.
  This then simplifies to two decoupled sums since a digit cannot be at the same time zero and non-zero:
\begin{align*}
  \sum_{\ell'=0}^\infty b^{-(\ell'+1)} [ \xi_{\ell'+1}=0 ]
  \sum_{\ell=0}^\infty b^{-(\ell+1)} [ \xi_{\ell+1} \ne 0 ]
  &=
  \left( \sum_{\ell'=0}^\infty b^{-(\ell'+1)} - \sum_{j=1}^{\nzd{x}} b^{-a_j} \right)
  \left( \sum_{j=1}^{\nzd{x}} b^{-a_j} \right)
  \\
  &=
  \left( \frac{1}{b-1} - \sum_{j=1}^{\nzd{x}} b^{-a_j} \right)
  \left( \sum_{j=1}^{\nzd{x}} b^{-a_j} \right)
  .
\end{align*}
%
The other double sum, marked by\RefEq{eq:line2}, can also be interpreted combinatorially: the sum is taken over all ordered pairs of non-zero digits of $x$.
We can write:
\begin{align*}
  \sum_{\ell'=0}^\infty b^{-(\ell'+1)} [\xi_{\ell'+1}\ne 0]
          \sum_{\ell=\ell'+1}^\infty b^{-(\ell+1)} [\xi_{\ell+1}\ne 0]
  &=
  \sum_{j=1}^{\nzd{x}} b^{-a_j} \sum_{j'=j+1}^{\nzd{x}} b^{-a_{j'}}
  \\
  &=
  \frac{1}{2} \left(
    \left( \sum_{j=1}^{\nzd{x}} b^{-a_j} \right)
    \left( \sum_{j=1}^{\nzd{x}} b^{-a_{j}} \right)
    - \sum_{j=1}^{\nzd{x}} b^{-2a_j}
  \right)
  .
\end{align*}
Thus
\begin{align*}
  s_2(x)
  &=
  \frac{1}{b+1}
  \\
  &\quad
  -
  b (b-2)
  \frac{1}{2} \left(
    \left( \sum_{j=1}^{\nzd{x}} b^{-a_j} \right)
    \left( \sum_{j=1}^{\nzd{x}} b^{-a_{j}} \right)
    - \sum_{j=1}^{\nzd{x}} b^{-2a_j}
  \right)
  \\
  &\quad
  -
  b (b-1)
  \left( \frac{1}{b-1} - \sum_{j=1}^{\nzd{x}} b^{-a_j} \right)
  \left( \sum_{j=1}^{\nzd{x}} b^{-a_j} \right)
  .
\end{align*}
\end{proof}

The case where $b$ equals $2$ is of greatest practical importance, since in that case the matrix-vector product\RefEq{eq:mv} over $\F_b$ to generate the nodes of the QMC rule can be calculated most efficiently by using the bitwise operations of the computer. Additionally
\begin{align*}
  \sum_{j=1}^{\nzd{x}} b^{-a_j} = x
  \qquad\text{when}\qquad b=2.
\end{align*}
By specializing the previous result to $b=2$ we obtain the following explicit formulae.
\begin{corollary}\label{col:base2}
  For base $b=2$ we obtain the following explicit results:
  \begin{align*}
    \omega_2(x)
    &=
    s_1(x) + \tilde{s}_2(x)
    ,
    \\
    \omega_3(x)
    &=
    s_1(x) + s_2(x) + \tilde{s}_3(x)
    ,
  \end{align*}
  where
  \begin{align*}
    s_1(x) &= 1 - 2 x
    ,
    &
    s_2(x) &= 1/3 - 2 (1-x) x
    ,
    \\
    \tilde{s}_2(x) &= (1 - 5 t_1) / 2 + (2 - a_1) x
    ,
    &
    \tilde{s}_3(x) &= (1 - 43 t_2)/18 + (5 t_1 - 1) x - (2 - a_1) x^2
    ,
  \end{align*}
  with, for $0 < x < 1$,
  \begin{align*}
    a_1 &= - \floor{\log_2(x)}
    ,
    &
    t_1 &:= 2^{-a_1}
    ,
    &
    t_2 &:= 2^{-2 a_1}
    ,
  \end{align*}
  and $a_1 = 0$, $t_1 = 0$ and $t_2 = 0$ when $x=0$.
\end{corollary}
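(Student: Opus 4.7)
The result is obtained by a direct specialization of \RefThm{thm:arbitrary-x} to $b=2$. Three observations drive the simplification. First, in base~$2$ every non-zero digit of $x = \sum_{j=1}^{\nzd{x}} \xi_{a_j}\,2^{-a_j}$ equals~$1$, so $\sum_{j=1}^{\nzd{x}} 2^{-a_j} = x$, which collapses the combinatorial sums in $s_1, s_2$ into ordinary powers of~$x$. Second, the weighting factor $b(b-2)$ in the cross term of $s_2$ vanishes at $b=2$, so $s_2$ is merely a quadratic in~$x$. Third, since $\xi_{a_1}=1$, the shifted argument $y := 2^{a_1}x \bmod 1$ equals $2^{a_1}x - 1 \in [0,1)$, so the closed forms for $s_1, s_2$ may be composed directly in the formula for $\tilde{s}_3$.

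Given these, $s_1(x)$ and $s_2(x)$ reduce by inspection to $1-2x$ and $1/3 - 2(1-x)x$ using $b(b-1)=2$ and $1/(b-1)=1$. For $\tilde{s}_2$ I would substitute $b=2$, combine $2\,b^{-a_1} + b^{-a_1-1} = (5/2)\,t_1$ with $t_1 = 2^{-a_1}$, note $a_1 b - a_1 - b = a_1 - 2$, and obtain the stated $(1-5t_1)/2 + (2-a_1)x$.

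The main computational work concerns $\tilde{s}_3$. I would write $\sum_{j=2}^{\nzd{x}} 2^{-a_j} = x - t_1$, simplify the integer coefficients $a_1 b - a_1 - b + k$ for $k \in \{0,1,2\}$ to $a_1-2,\,a_1-1,\,a_1$, and expand $s_1(y) = 3 - 2^{a_1+1}x$ and $s_2(y) = 13/3 - 6\cdot 2^{a_1}x + 2\cdot 2^{2a_1}x^2$ after substituting $y = 2^{a_1}x - 1$ into the $s_1, s_2$ formulas just derived. The algebraic identities $t_2 \cdot 2^{a_1} = t_1$ and $t_2 \cdot 2^{2a_1} = 1$ (with $t_2 = 2^{-2a_1}$) convert every product $b^{-1}\cdot b^{-2a_1}\cdot 2^{k a_1}\cdot x^\ell$ into an expression in $t_1, t_2, x$ alone. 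Collecting powers of $x$ then yields a quadratic in $x$ whose $x^2$, $x$, and $x$-independent coefficients simplify to $-(2-a_1)$, $(5t_1-1)$, and $(1-43t_2)/18$, respectively. The $x=0$ boundary case follows by plugging the prescribed $a_1=0, t_1=t_2=0$ into the resulting expressions and checking agreement with the values $\tilde{s}_2(0)=1/2$ and $\tilde{s}_3(0)=1/18$ stated in \RefThm{thm:arbitrary-x}.

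No new ideas are required beyond \RefThm{thm:arbitrary-x}; the only real obstacle is the bookkeeping in assembling $\tilde{s}_3$, because the three constant-in-$x$ contributions coming from the three summands of \RefThm{thm:arbitrary-x} combine non-trivially, and careful tracking of signs and of the exponents of~$2$ appearing as $2^{ka_1}$ is essential in order to recognize the final $(1-43t_2)/18$.
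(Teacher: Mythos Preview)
Your proposal is correct and follows essentially the same route as the paper: specialize \RefThm{thm:arbitrary-x} to $b=2$, use $\sum_{j=1}^{\nzd{x}}2^{-a_j}=x$ and the vanishing of $b(b-2)$ to get $s_1,s_2$, and for $\tilde{s}_3$ expand $s_1(y)=3-2x/t_1$ and $s_2(y)=13/3-6x/t_1+2x^2/t_1^2$ at $y=2^{a_1}x-1$ before collecting terms. Your write-up is in fact more explicit than the paper's very terse proof, which records only the two key evaluations $s_1(x/t_1-1)$ and $s_2(x/t_1-1)$ and leaves the remaining bookkeeping to the reader.
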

\begin{proof}
  To obtain $\tilde{s}_3(x)$ we note that, for $x \ne 0$,
  \begin{align*}
    b^{a_1} x \bmod{1} 
    &=
    b^{a_1} x - \xi_{a_1}
    =
    x / t_1 - 1,
  \end{align*}
  and thus
  \begin{align*}
    s_1(x / t_1 - 1)
    &=
    3 - 2 x / t_1
    ,
    &
    s_2(x / t_1 - 1)
    &=
    13/3 - 6 x / t_1 + 2 x^2 / t_1^2
    .
  \end{align*}
\end{proof}


\section{Numerical tests}

We compare the explicit construction from \cite{Dic2008}, with the CBC algorithm based on (fast) circular convolution presented in this paper, i.e., \RefAlg{alg:fast-cbc}. From \cite{Dic2008} we note that, to obtain higher order digital nets of high quality, the underlying point sets in the construction should have small values of $t$. Consequently, we use Niederreiter-Xing points generated by Pirsic's implementation, see \cite{Pir2002}, to obtain the digital $(t', m, sd)$-nets.
In Table~\ref{tbl:alpha-2} we present a typical result for $b=2$, $\alpha=2$ and $s=5$ and two choices of weights $\gamma_j=0.9^j$ and $\gamma_j=j^{-2}$.
The numerical data in all our tests shows that the new construction produces better results.

\begin{table}
  \centering
  \begin{tabular}{c|cc}
    \hline
    $\gamma_j=0.9^j$ & $e_\text{CBC}$ & $e_\text{explicit}$ \\
    \hline
    $m=5$            & $0.9291$             & $1.0930$        \\
    $m=6$            & $0.4085$             & $0.4259$        \\
    $m=7$            & $0.1778$             & $0.1984$        \\
    $m=8$            & $0.0747$             & $0.0980$        \\
    $m=9$            & $0.0312$             & $0.0403$        \\
    $m=10$           & $0.0128$             & $0.0168$        \\
    $m=11$           & $0.0052$             & $0.0071$        \\
    $m=12$           & $0.0020$             & $0.0027$        \\
    \hline
  \end{tabular}
  \qquad
  \begin{tabular}{c|cc}
    \hline
    $\gamma_j=j^{-2}$ & $e_\text{CBC}$ & $e_\text{explicit}$ \\
    \hline
    $m=5$            & $0.028917$             & $0.096254$        \\
    $m=6$            & $0.009912$             & $0.014542$        \\
    $m=7$            & $0.003427$             & $0.005895$        \\
    $m=8$            & $0.001175$             & $0.002356$        \\
    $m=9$            & $0.000406$             & $0.000827$        \\
    $m=10$           & $0.000139$             & $0.000290$        \\
    $m=11$           & $0.000046$             & $0.000091$        \\
    $m=12$           & $0.000014$             & $0.000034$        \\
    \hline
  \end{tabular}
\caption{Comparison of the worst-case errors of CBC construction and explicit construction}\label{tbl:alpha-2}
\end{table}

For reference we conclude the paper with tables showing the generating vectors and worst case errors of higher order polynomial lattice rules in base~$2$ constructed using the new algorithm.
All polynomials are given by their canonical integer representation which is the polynomial evaluated at $\x=b=2$.
The results can be found in Table~\ref{tbl:vectors-alpha2} and Table~\ref{tbl:vectors-alpha3} for $\alpha=2$ and $\alpha=3$ respectively. 

\begin{table}
  \centering \small
  \begin{tabular}{c|cccccccccc}
    \hline
    \multicolumn{6}{l}{$b=2$, $m=10$, $\alpha=2$: $n=20$, $p=1179649$}
    \\
    \hline
    $j$ &
    1 &
    2 &
    3 &
    4 &
    5 
\\
    $q_j$ &
    453270 &
    920860 &
    324514 &
    394664 &
    106142
\\
    $e$ &
    2.14e-6 &
    4.55e-5 &
    6.27e-4 &
    3.75e-3 &
    1.30e-2
 \\
    \hline 
    $j$ &
    6 &
    7 &
    8 &
    9 &
    10 
\\
    $q_j$ &     
    587632 &
    279628 &
    676057 &
    626366 &
    856775  \\ $e$ &     3.39e-2 &
    7.45e-2 &
    1.43e-1 &
    2.51e-1 &
    4.08e-1 \\ \hline
  \end{tabular}
  \\[2mm]
  \begin{tabular}{c|cccccccccc}
    \hline
    \multicolumn{6}{l}{$b=2$, $m=12$, $\alpha=2$: $n=24$, $p=28311553$}
    \\
    \hline
    $j$ &
    1 &
    2 &
    3 &
    4 &
    5 
\\
    $q_j$ &
    2028384 &
    13051202 &
    839202 &
    14647583 &
    6874738
 \\
    $e$ &
    1.34e-7 &
    3.44e-6 &
    6.58e-5 &
    4.72e-4 &
    2.02e-3
 \\
    \hline 
    $j$ &
    6 &
    7 &
    8 &
    9 &
    10 
\\
    $q_j$ &     6522492 &
    13569662 &
    9821234 &
    10570369 &
    406897 \\  $e$ &     6.09e-3 &
    1.45e-2 &
    2.97e-2 &
    5.46e-2 &
    9.19e-2 \\ \hline
  \end{tabular}
\caption{Higher order rules up to $10$ dimensions for $b=2$, $\gamma_j = 0.9^j$ and $\alpha=2$}\label{tbl:vectors-alpha2}
\end{table}

\begin{table}
  \centering \small
  \begin{tabular}{c|cccccccccc}
    \hline
    \multicolumn{6}{l}{$b=2$, $m=7$, $\alpha=3$: $n=21$, $p=2621441$}
    \\
    \hline
    $j$ &

    1 &
    2 &
    3 &
    4 &
    5 
  \\
    $q_j$ &
    1492861 &
    1022044 &
    1785216 &
    215936 &
    1978368
 \\
    $e$ &
    2.02e-6 &
    5.24e-4 &
    8.20e-3 &
    4.05e-2 &
    1.22e-1
  \\
    \hline 
    $j$ &
    6 &
    7 &
    8 &
    9 &
    10 
  \\
    $q_j$ &     1197580 &
    1837814 &
    485609 &
    1636853 &
    48810 \\ $e$ &     2.82e-1 &
    5.54e-1 &
    9.80e-1 &
    1.60    &
    2.48   \\ \hline
  \end{tabular}
  \\[2mm]
  \begin{tabular}{c|ccccc}
    \hline
    \multicolumn{6}{l}{$b=2$, $m=8$, $\alpha=3$: $n=24$, $p=28311553$}
    \\
    \hline
    $j$ &
    1 &
    2 &
    3 &
    4 &
    5 
  \\
    $q_j$ &
    10844342 &
    2604270 &
    5720893 &
    8141702 &
    3831799
     \\
    $e$ &
    2.51e-7 &
    8.85e-5 &
    2.43e-3 &
    1.45e-2 &
    4.95e-2
     \\   \hline
    $j$ &
    6 &
    7 &
    8 &
    9 &
    10 
  \\
$q_j$ & 3616803 & 15701694 &
    7750425 &
    2240926 & 493873  \\ $e$ &   1.21e-1 & 2.49e-1 &
    4.54e-1 &
    7.59e-1 & 1.19  \\ \hline
  \end{tabular}
\caption{Higher order rules up to $10$ dimensions for $b=2$, $\gamma_j = 0.9^j$ and $\alpha=3$}\label{tbl:vectors-alpha3}
\end{table}

%


\end{document}